\theoremstyle{plain}
\newtheorem{Thm}{Theorem}
\newtheorem{Coro}[Thm]{Corollary}
\newtheorem{Lem}[Thm]{Lemma}
\theoremstyle{definition}
\newtheorem{Def}[Thm]{Definition}
\newcommand{\mS}{\mathcal{S}}
\begin{document}

\title[An upper bound on common stabilizations]{An upper bound on common stabilizations of Heegaard splittings}

\author{Jesse Johnson}
\address{\hskip-\parindent
        Department of Mathematics \\
        Oklahoma State University \\
        Stillwater, OK 74078 \\
        USA}
\email{jjohnson@math.okstate.edu}

\subjclass{Primary 57M}
\keywords{Heegaard splitting}

\thanks{This project was supported by NSF Grant DMS-1006369}

\begin{abstract}
We show that for any two Heegaard splittings of genus $p$ and $q$ for the same closed 3-manifold, there is a common stabilization of genus at most $\frac{3}{2}p + 2q - 1$.  One may compare this to recent examples of Heegaard splittings whose smallest common stabilizations have genus at least $p+q$ or $p + \frac{1}{2} q$ depending on the notion of equivalence.
\end{abstract}

\maketitle

A \textit{Heegaard splitting} of a compact, closed, orientable 3-manifold $M$ is a triple $(\Sigma, H^-_\Sigma, H^+_\Sigma)$ where $\Sigma$ is a compact, closed, separating surface in $M$ and $H^-_\Sigma$, $H^+_\Sigma$ are embedded handlebodies in $M$ such that $\partial H^-_\Sigma = \Sigma = \partial H^+_\Sigma = H^-_\Sigma \cap H^+_\Sigma$.  There are two notions of isotopy equivalence for Heegaard splittings that one can consider.  Under \textit{unoriented isotopy}, two Heegaard splittings are considered equivalent if there is an isotopy that takes one surface onto the other.  Under the stricter notion of \textit{oriented isotopy}, we also require that the isotopy take the first handlebody in one triple to the first handlebody in the other.  In particular $(\Sigma, H^-_\Sigma, H^+_\Sigma)$ and $(\Sigma, H^+_\Sigma, H^-_\Sigma)$ will be equivalent under unoriented isotopy, but may be distinct under oriented isotopy.  For either notion of equivalence, every 3-manifold will contain many different isotopy classes of Heegaard splittings.

In particular, a new Heegaard splitting can always be constructed from a given splitting by drilling one or more unknotted holes out of one of the handlebodies and attaching handles to the other handlebody that pass through these holes.  This new splitting is called a \textit{stabilization} and Reidemeister~\cite{reid} and Singer~\cite{sing} showed independently that given any two Heegaard splittings for the same 3-manifold, there is a third Heegaard splitting that is isotopic to a stabilization of each the original two.

The original proofs of this fact do not suggest what the genus of the common stabilization should be in terms of the original genera.  Rubinstein and Scharlemann~\cite{rub:compar} proved that if $M$ is non-Haken, then there is a common stabilization of genus at most $5p + 8q - 9$, where $p$ and $q$ are the genera of the original Heegaard splittings and $p \geq q$.  They later found a quadratic bound for Haken manifolds.  (Both bounds are valid for oriented or unoriented isotopy.)  

This upper bound has long been believed to be higher than necessary.  Recently, examples have been found by Hass-Thompson-Thurston~\cite{htt:stabs} of pairs of Heegaard splittings whose smallest common stabilization, up to oriented isotopy, has genus $p + q$.  For unoriented isotopy, the author of the present paper~\cite{me:stabs2} found pairs of Heegaard splittings whose smallest common stabilization is just below $p + \frac{1}{2} q$.  Kazuto Takao~\cite{takao} improved the methods in~\cite{me:stabs2} to show that there are Heegaard splittings of certain connect sum 3-manifolds whose smallest common stabilization has genus exactly $p + \frac{1}{2}q$. (In these last examples, $p = q$ and both are even).  For both types of equivalence, Dave Bachman~\cite{bachman} found examples with slightly lower stable genera than those mentioned above, around the same time.

In the present paper, we narrow the gap between the upper bound and the known examples.

\begin{Thm}
\label{mainthm}
If $(\Sigma, H^-_\Sigma, H^+_\Sigma)$, $(R, H^-_R, H^+_R)$ are Heegaard splittings for a 3-manifold $M$ such that the genera of $\Sigma$ and $R$ are $p$ and $q$, respectively, then there is a third Heegaard splitting $(T, H^-_T, H^+_T)$ of genus at most $\frac{3}{2}p + 2q - 1$ such that $(T, H^-_T, H^+_T)$ is isotopic to a stabilization of $(\Sigma, H^-_\Sigma, H^+_\Sigma)$ and to a stabilization of $(R, H^-_R, H^+_R)$.
\end{Thm}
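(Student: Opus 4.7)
The plan is to adapt the sweep-out machinery of Rubinstein and Scharlemann~\cite{rub:compar}. Take a sweep-out $f : M \to [-1,1]$ whose regular level sets are copies of $\Sigma$ parallel to $\Sigma = f^{-1}(0)$, with $f^{-1}(\pm 1)$ a pair of spines for $H^\pm_\Sigma$; take an analogous sweep-out $g$ associated to $R$. After perturbing $(f,g)$ so that $F = (f,g) : M \to [-1,1]^2$ is generic, the discriminant set is a finite graph in the square --- the Rubinstein-Scharlemann graphic --- and within each complementary region the level surfaces $\Sigma_t = f^{-1}(t)$ and $R_s = g^{-1}(s)$ are transverse with a combinatorial intersection pattern that is constant on the region.

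The next step is to locate a region, or a short sequence of adjacent regions, where the geometry of $\Sigma_{t_0} \cap R_{s_0}$ is rich enough to read off a common stabilization. I would label each region by how $\Sigma_t$ sits relative to the partition $M = H^-_R \cup H^+_R$ and symmetrically for $R_s$, and analyze how these labels may change across edges of the graphic and in the neighborhoods of the corners $(\pm 1,\pm 1)$. Standard arguments of this kind will either provide an isotopy that simplifies the situation outright or force the existence of a region whose intersection pattern is very constrained.

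In the constructive case, I would build the common stabilization $T$ from $\Sigma_{t_0}$ by tubing along carefully selected arcs lying on $R_{s_0}$ and on the spines of $R$. The design goal for the tubes is that the resulting surface $T$ admits, on one side, a spine that collapses within $H^-_\Sigma$ to a spine of $H^-_\Sigma$ and within $H^-_R$ to a spine of $H^-_R$, and symmetrically on the other side --- this is exactly the condition guaranteeing that $T$ is simultaneously a stabilization of $(\Sigma,H^-_\Sigma,H^+_\Sigma)$ and of $(R,H^-_R,H^+_R)$.

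The main obstacle will be the genus accounting. The target $\frac{3}{2}p + 2q - 1$ lies well below the naive count obtained by tubing $\Sigma$ along a complete spine of $R$, and the asymmetric coefficient $\frac{3}{2}$ on $p$ strongly suggests that roughly half of the handles of $\Sigma$ should be reused rather than duplicated in the construction --- presumably by splitting a spine of $\Sigma$ across $R$ and incorporating one portion into the construction on each side of $T$. The hard part will be to show that the good region in the graphic can always be chosen so that this reuse of a spine of $\Sigma$ and the addition of no more than $2q - 1$ further handles to accommodate $R$ can be realized simultaneously, and that the resulting graphs do collapse to spines of the appropriate handlebodies.
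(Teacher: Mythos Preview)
What you have written is a research plan rather than a proof: you explicitly flag the genus accounting as ``the main obstacle'' and concede that ``the hard part will be to show that the good region in the graphic can always be chosen'' with the required properties. Those are precisely the steps where all the content lies, and nothing in the proposal indicates how you would carry them out. The Rubinstein--Scharlemann graphic with two sweep-outs is exactly the setup that produced the bound $5p+8q-9$; getting from there to $\frac{3}{2}p+2q-1$ is not a matter of refining the region-labeling argument, and your speculation that the $\frac{3}{2}$ arises from ``splitting a spine of $\Sigma$ across $R$'' is not what happens in the paper.

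The paper's route is substantially different. It uses only \emph{one} sweep-out, the one for $\Sigma$, and never forms a two-parameter graphic. The key reduction is Lemma~\ref{esspinelem}: if a spine $K^+_R$ of $H^+_R$ has $n$ locally maximal horizontal components with respect to the sweep-out $f$, then (via Lei's Lemma~\ref{leilem}) there is a common stabilization of genus $p+q+n-1$. Everything else in the paper is devoted to producing such a spine with $n\le \frac{1}{2}p+q$ (Lemma~\ref{thinthenesslem}). This is done by building a new thin-position theory for surfaces relative to the pair of handlebodies $H=H^-_\Sigma\cup H^+_\Sigma$ (Sections~\ref{hcomsect}--\ref{cassongordonsect}), representing $R$ by a thin path in the resulting complex $\mS(M,H)$, realizing that path by a sequence of \emph{essential flat surfaces} in $\Sigma\times[0,1]$ related by band moves and horizontal compressions (Sections~\ref{flatsect}--\ref{essisosect}), and then reading off a spine from this sequence. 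The coefficient $\frac{1}{2}$ comes from a symmetry trick: the local maxima arising from simple and non-spherical parallel surfaces number at most the genus of $R$, and reversing the direction of $f$ turns maxima into minima, so one may assume at most half of them are maxima. The extra $q$ comes from a separate count of maxima arising from parallel spheres, bounded by the genus of $\Sigma$ via a disjoint-nonseparating-loops argument. None of this is visible from the two-sweep-out picture you propose.
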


This result is for oriented or unoriented isotopy.  The proof uses a result of Fengchun Lei~\cite{lei}, to turn a nice position of a spine for one Heegaard splitting with respect to a sweep-out for the other into a common stabilization.  Lei's result is described in Section~\ref{connsect}.

The bulk of the paper is devoted to finding this nice position.  Specifically, we define a type of thin position for surfaces with respect to a pair of embedded handlebodies, from which we derive a structure very similar to the Rubinstein-Scharlemann graphic for a pair of sweep-outs~\cite{rub:compar}.  The thin position setting allows us to get much more control of the behaviour of the graphic.  

We define this new type of thin position using the axiomatic method introduced in an earlier paper~\cite{axiomatic}. We review the axiomatic setup and define handlebody thin position in Sections~\ref{hcomsect} and~\ref{thinaxiomsect}.  All but two of the axioms are immediate for handlebody thin position and this is proved in Section~\ref{thinhandlesect}.  We prove that the final two axioms hold in Sections~\ref{boundaryaxiomsect} and~\ref{cassongordonsect}.

We will be interested in the case when the complement of the two handlebodies is a surface-cross-interval.  In Section~\ref{flatsect}, we define flat surfaces, as a combinatorial model for surfaces in this structure.  Section~\ref{esssurfsect} and~\ref{bandmovesect} define essential flat surfaces and describe a type of combinatorial move that can be used to modify flat surfaces.  In Section~\ref{indexonesect}, we show that every index-one surface with respect to such a pair of handlebodies can be made essential, then in Section~\ref{essisosect} we describe how a thin path in the complex of surfaces can be a realized by a sequence of essential flat surfaces related by the moves defined in Section~\ref{bandmovesect}.  Finally, in Section~\ref{mainthmsect}, we apply these results to prove Theorem~\ref{mainthm}.

I thank Alex Coward, Joel Hass, Martin Scharlemann and Abby Thompson for many helpful suggestions.

\section{Common Stabilizations}
\label{connsect}

One of the main tools in this paper is a result that is proved, though not stated explicitly in a paper of Fengchun Lei.  It follows from the classification of Heegaard splittings of compression bodies, which is a corollary of Scharlemann and Thompson's classification~\cite{sct:crossi} of Heegaard splittings of any surface cross an interval.

A handlebody $H$ is by definition homeomorphic to a regular neighborhood of a graph $K \subset H$ such that $H$ deformation retracts onto $K$.  Any graph in $H$ with this property is called a \textit{spine} of $H$.

\begin{Lem}[Lei~\cite{lei}]
\label{leilem}
If $(\Sigma, H^-_\Sigma, H^+_\Sigma)$ and $(T, H^-_T, H^+_T)$ are Heegaard splittings of a 3-manifold $M$ such that a spine $K^-_\Sigma$ of $H^-_\Sigma$ is contained in a spine $K^-_T$ of $H^-_T$ then $(T, H^-_T, H^+_T)$ is a stabilization of $(\Sigma, H^-_\Sigma, H^+_\Sigma)$.
\end{Lem}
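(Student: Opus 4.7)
The plan is to apply the Scharlemann--Thompson classification of Heegaard splittings of compression bodies, cited just before the statement, in order to reduce the claim to an assertion about a single Heegaard splitting of the handlebody $H^+_\Sigma$.

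First, realize the spine inclusion geometrically. Because $H^-_\Sigma$ is homeomorphic to a regular neighborhood of $K^-_\Sigma$, an ambient isotopy lets me assume that $H^-_\Sigma$ is the regular neighborhood of $K^-_\Sigma$ taken inside $H^-_T$; in particular $H^-_\Sigma \subset H^-_T$ and $H^+_T \subset H^+_\Sigma$. Let $W$ be the closure of $H^-_T \setminus H^-_\Sigma$. Viewing $K^-_T$ as obtained from $K^-_\Sigma$ by adjoining finitely many vertices and edges yields a handle decomposition of $W$ built from $\Sigma \times I$ by attaching $0$- and $1$-handles to $\Sigma \times \{1\}$; since $K^-_T$ is connected, each extra $0$-handle can be absorbed into an adjacent $1$-handle, which exhibits $W$ as a compression body with $\partial_- W = \Sigma$ and $\partial_+ W = T$.

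Next, since $H^+_\Sigma = W \cup_T H^+_T$ with $W$ a compression body whose minus boundary is $\partial H^+_\Sigma = \Sigma$ and $H^+_T$ a handlebody with boundary $T$, the surface $T$ is a Heegaard surface for the handlebody $H^+_\Sigma$. The cited classification then says that this Heegaard splitting of $H^+_\Sigma$ is a stabilization of the trivial, boundary-parallel splitting.

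Finally, lift that stabilization back to $M$. A stabilization of the trivial splitting of $H^+_\Sigma$ adds handles to a copy of $\Sigma$ sitting just inside $H^+_\Sigma$; the portion of each added handle on the compression body side of $T$ contributes to the $H^-$ side of the full splitting of $M$, and the portion on the handlebody side contributes to the $H^+$ side. Performed in $M$, these operations are precisely stabilizations of $(\Sigma, H^-_\Sigma, H^+_\Sigma)$, and after performing all of them the resulting triple is exactly $(T, H^-_T, H^+_T)$. I expect the main care to be in the compression body identification for $W$, where the extra vertices of $K^-_T \setminus K^-_\Sigma$ must be dealt with; everything afterwards is essentially bookkeeping on top of the cited classification.
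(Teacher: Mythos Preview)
Your proposal is correct and follows exactly the route the paper indicates: the paper does not give its own proof but simply attributes the lemma to Lei and remarks that it follows from the Scharlemann--Thompson classification of Heegaard splittings of compression bodies. Your argument---realizing $H^-_\Sigma \subset H^-_T$, identifying $W = \overline{H^-_T \setminus H^-_\Sigma}$ as a compression body, recognizing $T$ as a Heegaard surface for the handlebody $H^+_\Sigma$, and invoking the classification---is precisely the intended derivation.
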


It follows from this Lemma that to find a common stabilization of two Heegaard splittings, we need to find a graph that determines a Heegaard splitting and contains spines for handlebodies in the initial two Heegaard splittings.  In order to do this, we must put the two spines in a simple position relative to each other as follows:

A \textit{sweep-out} of a Heegaard splitting is a smooth function $f : M \rightarrow [-1,1]$ such that $f^{-1}(-1)$ is a spine for $H^-_\Sigma$, $f^{-1}(1)$ is a spine for $H^+_\Sigma$ and there are no critical points away from these graphs.  In such a function, the level sets $\Sigma_t = f^{-1}(t)$ for $t \in (-1,1)$ will be pairwise disjoint, embedded surfaces parallel to $\Sigma$.

Consider a spine $K = K^+_R$ of $H^+_R$ that is disjoint from the spines $K^-_\Sigma$, $K^+_\Sigma$.  Each component of intersection $\Sigma_t \cap K$ will be called a \textit{horizontal component}. We are not assuming that $K$ is transverse to the surfaces $\Sigma_t$, so horizontal components may contain non-trivial subgraphs of $K$. A horizontal component $X$ will be called \textit{locally maximal} if $X$ is also a connected component of $K \cap f^{-1}([t,1])$.  

\begin{Lem}
\label{esspinelem}
Let $(\Sigma, H^-_\Sigma, H^+_\Sigma)$, $(R, H^-_R, H^+_R)$ be Heegaard splittings for $M$ with genera $p$, $q$, respectively.  If $K^+_R$ is a spine for $H^+_R$ and there are  $n$ locally maximal horizontal components then there is a common stabilization for $\Sigma$, $R$ of genus at most $p + q + n - 1$.
\end{Lem}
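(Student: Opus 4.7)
My strategy is to combine $K^+_\Sigma$ and $K^+_R$ into a single connected graph $K^+_T$ that is the spine of one side of a new Heegaard splitting $T$ of genus $p+q+n-1$; two applications of Lemma~\ref{leilem} (with $+$ and $-$ interchanged) then yield the desired common stabilization.

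To build $K^+_T$, enumerate the locally maximal horizontal components as $p_1, \dots, p_n$ at levels $t_1 > \cdots > t_n$ and pick a regular value $t_0 > t_1$, so that $K^+_R \subset f^{-1}((-1, t_0))$. For each $i$, the local maximality of $p_i$ says that $K^+_R$ lies at level $\leq t_i$ in a neighborhood of $p_i$, so I can start an embedded arc $\alpha_i$ at $p_i$ going straight up (transverse to level surfaces, with $f$ strictly increasing) without immediately re-intersecting $K^+_R$, continue it through the complement of $K^+_R$ in $f^{-1}([t_i, t_0])$, and then route it freely through $f^{-1}([t_0, 1])$ to a point on $K^+_\Sigma$. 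Constructing the arcs successively (from the highest peak down) and making generic perturbations at each stage, I arrange that the $\alpha_i$ are pairwise disjoint and meet $K^+_R$ only at the peaks $p_i$. Setting $K^+_T := K^+_\Sigma \cup K^+_R \cup \alpha_1 \cup \cdots \cup \alpha_n$, the graph is connected with first Betti number $p + q + n - 1$: the arc $\alpha_1$ merges the two components $K^+_\Sigma$ and $K^+_R$ without creating a new cycle, and each subsequent arc contributes exactly one cycle. Its regular neighborhood $H^+_T$ is then a handlebody of this genus with $K^+_T$ as a spine.

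The main obstacle is to verify that $H^-_T := \overline{M \setminus H^+_T}$ is also a handlebody, so that $(T, H^-_T, H^+_T)$ with $T := \partial H^+_T$ is a genuine Heegaard splitting. An Euler-characteristic count gives $\chi(H^-_T) = 2 - p - q - n$, consistent with genus $p+q+n-1$, but one still needs to rule out more complicated topology. The argument I have in mind uses the sweep-out $f$ to give $M$ a compatible handle decomposition in which the 0- and 1-handles assemble into $H^+_T$; the local maximality of each $p_i$ is precisely the condition making the attaching data for the dual 2-handles \emph{unknotted} along $T$, so that the dual handles glue into a handlebody. Once this is established, the inclusions $K^+_\Sigma \subset K^+_T$ and $K^+_R \subset K^+_T$ let Lemma~\ref{leilem}, applied with the roles of $+$ and $-$ exchanged, conclude that $T$ is simultaneously a stabilization of $(\Sigma, H^-_\Sigma, H^+_\Sigma)$ and of $(R, H^-_R, H^+_R)$.
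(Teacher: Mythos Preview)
Your construction of $K^+_T$ and the genus count match the paper exactly: the paper also takes vertical arcs from each locally maximal horizontal component of $K^+_R$ up to $K^+_\Sigma$ and forms the same union, for the same Betti-number reason. The application of Lemma~\ref{leilem} at the end is also identical.

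The gap is precisely where you flag it: showing that $H^-_T$ is a handlebody. Your sketch (``a compatible handle decomposition in which the $0$- and $1$-handles assemble into $H^+_T$; local maximality makes the dual $2$-handle attaching data unknotted'') is not an argument yet, and as stated it is not clear what the handle decomposition is or why local maximality translates into unknottedness of anything. The paper's approach here is concrete and worth knowing. The key observation is that once the vertical arcs are added, $K^+_\Sigma$ becomes the \emph{unique} locally maximal horizontal component of $K^+_T$. Consequently, if $v$ is the highest vertex of $K^+_T$ outside $K^+_\Sigma$, there is an edge from $v$ that is monotone increasing in $f$ and hence ends in $K^+_\Sigma$; shrink that edge to drag $v$ into $K^+_\Sigma$, carrying the other edges at $v$ along. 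Iterating, one arrives at a graph $K_m$ (with regular neighborhood isotopic to that of $K^+_T$) whose vertices all lie in $K^+_\Sigma$ and whose remaining edges each have a single minimum with respect to $f$. Such an edge meets the handlebody $H' \cong H^-_\Sigma$ complementary to a neighborhood of $K^+_\Sigma$ in a boundary-parallel arc, and drilling boundary-parallel arcs out of a handlebody yields a handlebody. That is the missing step.

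A small side remark: you route $\alpha_i$ ``freely'' through $f^{-1}([t_0,1])$, but for the argument above you want the $\alpha_i$ to be monotone (vertical) throughout; since $K^+_R$ is already disjoint from that region this costs nothing, but it matters for the ``unique local maximum'' property that drives the proof.
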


\begin{proof}
Given a sweep-out $f$ for $\Sigma$, let  $K^+_\Sigma = f^{-1}(1)$ be the spine for $H^+_\Sigma$ defined by $f$.  Assume we have chosen the spine $K^+_R$ for $H^+_R$ so that every horizontal level is either a single point or a graph with a single vertex.  (We can do this by collapsing any horizontal edge with distinct endpoints down to a single vertex.) Because $f$ is a sweep-out, we can choose a vertical arc (with respect to $f$) from each locally maximal component of $K^+_R$ to the graph $K^+_\Sigma$.  Let $K^+_T$ be the union of $K^+_\Sigma$, $K^+_R$ and this collection of vertical arcs.  Because $K^+_R$ and $K^+_\Sigma$ are disjoint, the genus of the graph is $p + q + n - 1$, where $p$ and $q$ are the genera of $K^+_R$ and $K^+_\Sigma$, and they are connected by $n$ vertical arcs.  We will show that the graph $K^+_T$ defines a Heegaard splitting $(T, H^-_T, H^+_T)$.  By Lemma~\ref{leilem}, this Heegaard splitting will be a common stabilization of genus $p + q + n - 1$.

Note that for the graph $K^+_T$, the only locally maximal horizontal component is $K^+_\Sigma$.  Thus if $v$ is the highest vertex in $K^+_T$ outside of $K^+_\Sigma$, there will be a path $\alpha$ from $v$ to a point in $K^+_\Sigma$ that is non-decreasing with respect to $f$.  Because $v$ is maximal, the path $\alpha$ cannot have any vertices in its interior.  This implies that $\alpha$ follows a single monotonic edge.  Let $K_1$ be the result of shrinking this edge to a point, pulling $v$ up into $K^+_\Sigma$ and extending all the other edges with endpoint in $v$ up along $\alpha$.  This proceedure is illustrated in Figure~\ref{commonspinefig}.  The new graph still contains $K^+_\Sigma$ and has the property that $K^+_\Sigma$ is the only maximal horizontal component.  Moreover, a regular neighrborhood of $K_1$ is isotopic to a regular neighborhood of $K^+_T$.  If we repeat the process for the highest vertex in $K_1$ and so on, the result is a graph $K_m$ with all its vertices in $K^+_\Sigma$ and no other locally maximal sublevels.  
\begin{figure}[htb]
  \begin{center}
  \includegraphics[width=4.5in]{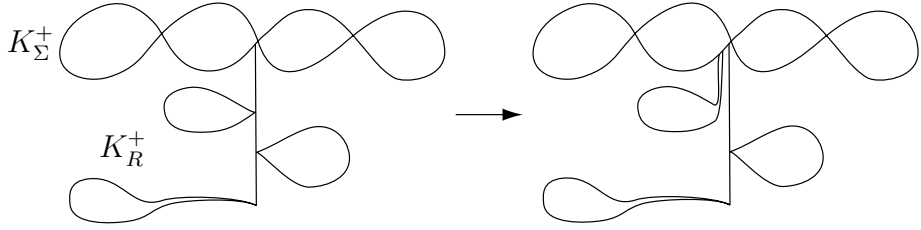}
  \put(-345,65){$K^+_\Sigma$}
  \put(-310,25){$K^+_R$}
  \caption{In the graph $K^+_T$ containing $K^+_R$ and $K^+_\Sigma$, we pull each vertex in $K^+_T$ up into $K^+_\Sigma$ by shrinking vertical edges.}
  \label{commonspinefig}
  \end{center}
\end{figure}

Each edge of $K_m$ outside of $K^+_\Sigma$ has the property that every point is connected to $v_0$ by a non-decreasing path.  Thus each edge must have a single minimum with respect to $f$.  The complement $H'$ of a regular neighborhood of $K^+_\Sigma$ is a handlebody isotopic to $H^-_\Sigma$ and each edge of $K_m$ intersects this handlebody in a boundary parallel arc (since it has a single minimum with respect to $f$).  Thus the complement in $M$ of a regular neighborhood of $K_m$ is a handlebody.  By construction, a regular neighborhood of $K_m$ is isotopic to a regular neighborhood of $K^+_T$, so $K^+_T$ defines a Heegaard splitting $(T, H^-_T, H^+_T)$.  As noted above this Heegaard splitting has genus $p + q + n - 1$ and is a common stabilization of $\Sigma$ and $R$.
\end{proof}

\section{Handlebodies and h-compressions}
\label{hcomsect}

Let $M$ be a closed 3-manifold and $H$ a handlebody or a collection of handlebodies embedded in $M$.  We will say that a surface $S$ is \textit{transverse} to $H$ if it is transverse to $\partial H$ and $S \cap H$ is a collection of disks in $S$ that are essential disks for $H$.  

Let $C$ be a component of a surface $S$ transverse to $H$.  If $C \setminus H$ is a sphere with at most two punctures then its \textit{complexity} is zero.  The complexity of a sphere with three punctures is one. Otherwise, the \textit{complexity} of $C$ is twice its genus minus its Euler characteristic minus one.  The complexity of $S$ will be the sum of the complexities of its components.  So for a surface $S$ with no sphere components, the complexity of $S$ will be twice the sum of the genera of its components minus the Euler characteristic, minus the number of components.  For each component, the Euler characteristic is non-positive and the genus is strictly positive, so the complexity is non-negative and is zero exactly when $S$ consists of spheres, each intersecting $H$ in at most two disks.

A \textit{trivial sphere} is a component of $S$ that consists of a sphere bounding a ball $B \subset M$ with interior disjoint from $S$ such that $H \cap B$ is either empty or a regular neighborhood of of an unknotted (i.e. boundary parallel) arc in $B$.  Two surfaces $S$, $S'$ will be called \textit{sphere-blind isotopic} if they are related by a sequence of isotopies transverse to $H$ and the following three types of moves, or their inverses:

\begin{enumerate}
  \item We may remove from $S$ a trivial sphere component.
  \item If $C$ is a sphere component (not necessarily trivial) disjoint $H$ then we can attach an embedded tube from $C$ to any other component of $S$.
  \item If $C$ is a sphere component that intersects $H$ in exactly two disks and $C'$ is a second component of $S$ such that a loop of $C' \cap \partial H$ is parallel in $\partial H$ to a loop of $C \cap \partial H$ then we may attach a tube between these two loops parallel to $\partial H$.
\end{enumerate}

Note that the second and third of these moves will be equivalent to the first move if $C$ is a trivial sphere.  However, if $C$ is not trivial then the resulting surface may not be isotopic to the original. All three moves produce a new surface with the same complexity as the original.

We will say that $S$ is \textit{strongly separating} if the components of $M \setminus S$ can be labeled $+$ and $-$ so that each component of $S$ is in the frontier of one component labeled $+$ and one component labeled $-$.  (If $S$ is connected then strongly separating is equivalent to separating.)  A choice of labels $+/-$ will be called a \textit{transverse orientation} and every strongly separating surface will have exactly two transverse orientations (since we have assumed $M$ is connected.)  Note that the three moves defining sphere-blind isotopy preserve the property of being strongly separating and there is a canonical way to project a transverse orientation from the original surface to the new surface.

Let $S$ be a closed, transversely oriented, strongly separating surface transverse to $H$.  A \textit{compressing disk} for $S$ (with respect to $H$) is a disk $D$ disjoint from $H$ whose boundary is an essential loop in $S \setminus H$ and whose interior is disjoint from $S$.  Compressing $S$ across $D$ produces a new strongly separating surface transverse to $H$, and the transverse orientation on $S$ defines a transverse orientation on the new surface.  Note that if we were to instead compress $S$ along a second disk $D'$ with the same boundary as $D$, the resulting surface would be sphere-blind isotopic (though not necessarily isotopic) to the result of compressing along $D$.

A \textit{bridge disk} for $S$ (with respect to $H$) is a disk $D$ whose boundary consists of an arc in $S$ and an arc in $\partial H$ such that each arc connects two distinct components of $S \cap H$.  Moreover, we require that the arc is not in a twice-punctured sphere component of $S$.  Isotoping $S$ across such a disk, then isotoping the surface further to remove any resulting trivial disk of intersection, produces a new surface that is still transverse to $H$ and is still transversely oriented.  This is called a \textit{bridge compression} of $S$, and as with compression the boundary of $D$ determines the new surface up to sphere-blind isotopy.

A \textit{cut disk} for $S$ is a disk that intersects $H$ in a single essential disk, whose boundary is an essential loop in $S \setminus H$ and whose interior is disjoint from $S$.  Compressing $S$ across such a disk produces a new strongly separating surface transverse to $H$ we will call this a \textit{cut compression} of $S$. Because of move 3, the boundary of a cut disk determines the resulting surface up to sphere-blind isotopy.

An \textit{h-disk} for $S$ is either a compressing disk a bridge disk, or a cut disk for $S$.  We will see below that each of the three types of compressions defined by these three types of disks reduces the complexity of the surface.  An \textit{h-compression} is any one of these three moves.

Following the setup in~\cite{axiomatic}, we will define $\mS(M, H)$ as the cell complex whose vertices are sphere-blind isotopy classes of closed, strongly separating surfaces transverse to $H$ and whose edges correspond to h-compressions.  In other words, we choose a representative $S$ for each sphere-blind isotopy class $v$.  For each isotopy class of loop bounding an h-disk in $S$, we choose an h-disk $D$ with this boundary and include in $\mS(M, H)$ an edge from $v$ to the isotopy class represented by the surface that results from h-compressing along $D$.  While there may be non-isotopic h-disks with the isotopic boundary, the results of h-compressing across these disks will be sphere-blind isotopic, and thus determine a unique second vertex in $\mS(M, H)$.

We would like to include a face whenever two h-disks are disjoint or when two compressing disks in a torus component disjoint from $H$ intersect in a single point.  For each edge $e$ in $\mS(M, H)$ below a vertex $v$, with second endpoint $v'$, we have representatives $S$, $S'$ for $v$, $v'$ and a representative $D$ for the h-compression that makes $S$ isotopic to $S'$.

If $e_2$ is a second edge below $v$ such that the boundary of the disk $D_2$ representing $e_2$ can be made disjoint from $D$.  Then after this compression, $\partial D_2$ is contained in $S''$.  If this loop or arc is essential in $S''$ then the isotopy from $S''$ to $S'$ takes $\partial D_2$ to either a trivial loop in $S'$ or the boundary of an h-disk for $S'$, representing an edge $e'_2$ below $v'$. In the first case, we will say that $e_2$ \textit{projects to a point}. In the second case, we will call the edge $e'_2$ the \textit{projection} of $e_2$ across $e$.

Note that in the construction above, there may be many inequivalent isotopies from the surface that results from the h-compression to the representative for that isotopy class. A 2-cell is defined by choosing one such representative for each edge, and we will include in $\mS(M, H)$ a 2-cell for each pair of choices.

If the image of $\partial D_2$ is trivial in $S'$ then we will let $e'_2$ be the vertex $v'$, which we will think of as a length-zero edge from the vertex to itself.  There is also a projection $e'$ of $e$ below the vertex at the other end of $e_2$.  The vertices at the lower endpoints of $e'$ and $e'_2$ result from compressing $S$ across the disks $D$, $D_2$ and are thus sphere-blind isotopic.  If both $e$ and $e_2$ project to edges then the four edges form a loop, which we will fill in with a 2-cell in $\mS(M, H)$. If one or both edges project to a point, then we get a triangle or a bigon loop, which we will still fill in with a 2-cell.

From the 2-dimensional cells, we can define higher dimensional cells by projecting as in~\cite{axiomatic}, as follows:

Assume $v$ is a vertex in $\mS(M, H)$ with edges $e_1, e_2, e_3$ below $v$ so that any pair of these edges are contained in a 2-cell below $v$.  For simplicity, we will initially assume that all these 2-cells are quadrilaterals.  Then $e_1$, $e_2$ correspond to disjoint h-disks $D_1$, $D_2$ for $S$ and the second endpoint of $e_3$ corresponds to a surface $S'$ that results from h-compressing $S$ along a third h-disk $D_3$ disjoint from $D_1$ and $D_2$.  The disks $D_1$ and $D_2$ have disjoint boundaries in $S'$. If both project to edges then their projections $e'_1$, $e'_2$ along $e_3$ define a 2-cell below $v'$. If one or both project to points, then the face defined by $e_1$, $e_2$ will project to either a single edge below $v'$ or a point.

Projecting each pair of edges across the third edge determines three 2-cells, each below the second vertex of one of the edges $e_1$, $e_2$, $e_3$.  The lowest vertex on each of these 2-cells is the result of h-compressing $S$ along each of the disks $D_1$, $D_2$, $D_3$ in different orders.  However, the resulting surface is the same regardless of the order of compressions.  Thus the three 2-cells defined by pairs of edges in $e_1$, $e_2$, $e_3$ and the three 2-cells that result from projecting form the boundary of a cube below $v$.  We will include in $\mS(M, H)$ a 3-cell bounded by this cube.

If one or more of the 2-cells below $v$ is not a quadrilateral, we can perform a similar construction, with a resulting collection of 2-cells isomorphic to the result of crushing some faces and edges of a cube.  Given four edges below a vertex $v$ such that any two of the edges determine a 2-cell, we have constructed a 3-cell containing each subset of three of them.  We can further use projection to find a collection of 3-cells forming the boundary of a (possibly crushed) 4-cube below $v$ and insert a 4-cell into $\mS(M, H)$ below $v$, then repeat the process for each successive dimension.  The cell complex $\mS(M, H)$ will be the union of all such cells.  

The \textit{descending link} $L_v$ of $v$ is the simplicial quotient of the subcomplex of the link spanned by the vertices corresponding to edges below $v$. By the \textit{simplicial quotient}, we mean the simplicial complex that results from identifying any two simplices in the link that have the same boundary. This is necessary because, for example, a pair of vertices in the link may be spanned by an infinite number of edges.

The descending link of a vertex is made up of the ``corners'' of the cells below that vertex.  Because each $n$-cell in a height complex $\mS$ is defined by $n$ edges below a given vertex of $\mS$, the descending link is a simplicial complex, i.e. each cell in the descending link is a convex hull of its vertices. (In fact, it's a flag complex.)

In the following sections, we will recall the axioms defined in~\cite{axiomatic} and check that they are satisfied by $\mS(M, H)$.

\section{The thin position axioms}
\label{thinaxiomsect}

The following six axioms refer to a cell complex $\mS$ with oriented edges and a complexity function $c$.  In~\cite{axiomatic}, we showed that with the proper interpretation, the standard results about thin position (such as those in~\cite{schtom}) can be deduced from these six axioms.  We will prove in the following sections that $\mS(M, H)$ satisfies these axioms, so that we can use the resulting Theorems from axiomatic thin position. \\

\noindent
\textbf{The Net Axiom}: For any vertex $v \in \mS$, there is an integer $\ell(v)$ such that every edge path starting at $v$, along which the complexity strictly decreases, has length at most $\ell(v)$. \\

As in~\cite{axiomatic}, the 2-cells defined for $\mS(M, H)$ are all quadrilaterals, triangles and bigons with a unique maximum and minimum (with respect to the complexity).  Projections between edges are determined entirely by the 2-cells, and these in turn determine all the higher dimensional cells.  This is the basis for the following axiom: \\

\noindent
\textbf{The Morse Axiom}: Every 2-cell in $\mS$ is a diamond, a triangle or a bigon with a single local maximum.  Given three edges such that any two bound a 2-cell, the projection of any two across the third will determine a face, an edge or will project to a point.  Every $n$-cell $C$ is defined by mapping the boundary of an $n$-cube into the $n-1$-skeleton via projections. \\

The interior of each h-disk $D$ representing an edge $e$ below $v$ is contained in either the positive or negative complement of the surface $S$.  We will orient each edge so that it faces towards $v$ if $D$ is on the negative side of $S$, and away from $v$ if $D$ is on the positive side of $S$. An edge and its projection will be \textit{parallel-oriented} if either they both point up (with respect to the complexity) or they both point down. A 2-cell is \textit{parallel-oriented} if all its edges and their projections are parallel-oriented. \\

\noindent
\textbf{The Parallel Orientation Axiom}: For any 2-cell $q$ in $\mS$, the orientations on the edges of $q$ make it a parallel-oriented diamond, triangle or bigon. \\

The next two axioms deal with paths in $\mS(M, H)$. A path is \textit{oriented} if the orientation on each edge points from a given vertex to the next vertex in the path. The edges in a \textit{reverse-oriented} path all point towards the previous vertex.

By the Parallel Orientations axiom, if an oriented path contains two edges in a face of $\mS(M, H)$ then we can form a new path by replacing the two edges by the other two edges in the boundary.  If the initial or the final two edges are adjacent to the top vertex in the of the face then this construction either creates or eliminates a local maximum in the path.  Such a move will be called a \textit{vertical slide}.  

A move in which the initial and the final path both pass through the top and bottom vertices will be called a \textit{horizontal slide}.  Such a move does not affect the local maxima and minima in the path.  Two paths will be called \textit{equivalent} if they are related by a sequence of horizontal slides. 

We will say that a vertex $v$ in $\mS(M, H)$ is \textit{compressible} if there is an edge below $v$, i.e. the descending link of $v$ is non-empty.  Moreover, we will say that $v$ is compressible to the \textit{positive (negative)} side if there is an edge below $v$ pointing away from (towards) $v$. For a given path $E$ with local maximum $v$, the \textit{path link} is the subcomplex of the descending link spanned by the edges that can appear before or after $v$ in a path equivalent to $E$. \\

\noindent
\textbf{The Casson-Gordon Axiom}: Let $v$ be a maximum in an oriented path $E$, and let $v_-$, $v_+$ be the minima of $E$ right before and after $v$, respectively.  If $v_-$ is compressible to the positive side then either the path link of $v$ is contractible or $v_+$ is compressible to the positive side.  Similarly, if $v_+$ is compressible to the negative side then either the path link of $v$ is contractible or $v_-$ is compressible to the negative side.  \\

\noindent
\textbf{The Barrier Axiom}:  Given any vertex $v \in \mS$, there are vertices $v_-$, $v_+$ and paths $E_-$, $E_+$ from, $v$ to $v_-$ and $v_+$, respectively such that the following hold: Any directed path descending from $v$ can be extended to a decreasing path ending in $v_+$ that is equivalent to $E_+$.  Any reverse-directed path descending from $v$ can be extended to a decreasing, reverse-directed path ending at $v_-$ that is equivalent to $E_-$. \\

\noindent
\textbf{The Translation Axiom}: Let $q^+$ be an $n$-cell in $\mS$ such that the  edges adjacent to the minimum vertex $v$ of $q$ all point away from $v$ and let $q^-$ be an $m$-cell in $\mS$ such that the  edges adjacent to the minimum vertex $v$ of $q$ all point towards $v$.  Then there is a unique $(n+m)$-cell $C$ isomorphic to $q^+ \times q^-$ such that $q^+ = q^+ \times \{v\}$ and $q^- = \{v\} \times q^+$, up to self-isotopies of the surface defined by the maximum vertex of $C$. \\

\section{The axioms for handlebody thin position}
\label{thinhandlesect}

Four of the six axioms follow relatively easily, as we will describe below.  The following two sections are devoted to showing that $\mS(M, H)$ satisfies the remaining two axioms.

\begin{Lem}
\label{netaxiomlem}
The complex $\mS$ satisfies the Net axiom.
\end{Lem}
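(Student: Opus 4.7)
The plan is to take $\ell(v) := c(v)$, the complexity of a sphere-blind-isotopy representative of $v$. Since complexity takes values in the non-negative integers and every edge of $\mS(M,H)$ represents an h-compression, it suffices to show that each h-compression strictly decreases complexity by at least one; any strictly decreasing edge path from $v$ will then have length bounded by $c(v)$.

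The verification splits into three cases according to the type of h-disk. A compressing disk is disjoint from $H$, so the intersection pattern $S \cap H$ is unaffected; the Euler characteristic of the containing component rises by two, and either the genus of one component drops by one (non-separating case) or a single component splits into two pieces of the same total genus (separating case). A quick check using the formula $c(C) = 2g(C) - \chi(C) - 1$, with due care for any component that becomes a sphere and is governed by the special sphere-component conventions, confirms a strict decrease in each subcase. A cut disk meets $H$ in a single essential disk, which is consumed by the compression, so $|S \cap H|$ drops by one and the same style of case analysis applies. A bridge disk does not change any genus but reduces $|S \cap H|$ by two, and after removing any trivial disks of intersection that result the complexity strictly drops.

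The only delicate points are well-definedness of $c$ on sphere-blind-isotopy classes and the borderline sub-case of the bridge move. Well-definedness is immediate once one observes that the three sphere-blind moves preserve complexity, which follows from the convention that sphere components with at most two punctures carry complexity zero and so are neutral for the tubing operations that define the equivalence. The main obstacle I anticipate is certifying that bridge compressions always strictly reduce $c$; this is precisely why the definition of a bridge disk in Section~\ref{hcomsect} excludes disks whose surface-arc lies inside a twice-punctured sphere, since such a component already carries minimum complexity zero and a bridge move across it would otherwise be complexity-neutral. Ruling this borderline out by hypothesis reduces the remaining cases to a finite and routine check.
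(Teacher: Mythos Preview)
Your approach is exactly the paper's: take $\ell(v)=c(v)$ and verify that each of the three kinds of h-compression strictly lowers the non-negative integer complexity. Your remarks on sphere-blind well-definedness and on the twice-punctured-sphere exclusion for bridge disks are also in the spirit of the paper.

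One slip to fix in the cut-disk case. The single essential disk $D\cap H$ sits in the cut disk $D$, not in $S$, so nothing is ``consumed'': cut-compressing replaces an annular neighborhood of $\partial D$ in $S$ by two parallel copies of $D$, each meeting $H$ in a disk, so $|S\cap H|$ goes \emph{up} by two, not down by one. Since $\chi(S)$ also rises by two, $\chi(S\setminus H)$ is unchanged---this is precisely what the paper records---and the strict decrease in complexity comes solely from the genus dropping or the component count increasing. Your conclusion survives, but the bookkeeping should be corrected.
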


\begin{proof}
To prove the Lemma, we must check that the complexity goes down under h-compression. The complexity of a surface is a non-negative integer so this will imply that any decreasing path is at most as long as the complexity of the original surface.

There are three types of h-compressions.  If we perform a (standard) compression on a surface $S$, we replace an annulus with two disks.  This increases the Euler characteristic of $S$ and either decreases the genus or increases the number of components, depending on whether the compression is non-separating or separating.  For a cut compression, an annulus is replaced by two punctured disks so the Euler characteristic stays the same.  However, as with (standard) compression, a cut compression either decreases the genus or increases the number of components, so the complexity will strictly drop.  For a bridge compression, the genus and number of components stay the same, but the Euler characteristic increases, so the complexity drops.
\end{proof}

\begin{Lem}
The complex $\mS$ satisfies the Morse axiom.
\end{Lem}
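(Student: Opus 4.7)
My plan is to observe that the construction of $\mS(M,H)$ given at the end of Section~\ref{hcomsect} was engineered precisely so that each cell has the shape demanded by the Morse axiom, so the proof should amount to a direct verification that the pieces match up, together with one genuinely substantive check about commuting compressions.

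First I would handle the 2-cells. Each 2-cell comes from a pair of h-disks $D_1, D_2$ for a representative surface $S$ of $v$, whose boundaries may be taken disjoint; a standard interior isotopy then lets me assume the disks themselves are disjoint. The top vertex is $v$, the two side vertices are the sphere-blind isotopy classes of $S$ after h-compressing along $D_1$ or $D_2$ separately, and the bottom vertex is the class after h-compressing along both. The Net axiom (Lemma~\ref{netaxiomlem}) already ensures that each compression strictly decreases complexity, so $v$ is the unique local maximum, and the 2-cell is a diamond, triangle, or bigon according to how many of the projections $e_1', e_2'$ collapse to points when the boundary of the other disk becomes inessential after compression. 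This case analysis is exactly the one spelled out in Section~\ref{hcomsect}.

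Second, for the three-edge projection clause, I would start with edges $e_1, e_2, e_3$ below $v$ such that every pair bounds a 2-cell, and realize them simultaneously by pairwise disjoint h-disks $D_1, D_2, D_3$ (the pairwise 2-cell hypothesis gives boundaries that can be made disjoint, after which the interiors are made disjoint by a small isotopy). Projecting $(e_1,e_2)$ across $e_3$ then amounts to h-compressing $S$ along $D_3$ and examining the images of $\partial D_1,\partial D_2$: depending on whether these remain essential and remain non-parallel, one gets a 2-cell, a single edge, or a point, which is exactly what the axiom demands. The $n$-cell clause then follows by iterating: the inductive projection construction of Section~\ref{hcomsect} is literally the statement that each $n$-cell is the image of an $n$-cube boundary, possibly with some faces crushed via degenerate projections, mapped into the $(n-1)$-skeleton.

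The main technical obstacle I expect is the commuting-compressions check underlying all three clauses: h-compressions along pairwise disjoint h-disks must give the same sphere-blind isotopy class regardless of the order in which they are performed. This is the entire reason sphere-blind isotopy was introduced in place of ordinary isotopy in Section~\ref{hcomsect}, since, for example, compressing across one disk may turn a disk on the other side into a tube to a sphere, which only the sphere-blind moves can absorb. I would handle this by a careful case analysis over the nine ordered pairings of compressing, cut, and bridge disks, in each case exhibiting the surface produced by one order as obtainable from the other by moves (1)--(3) in the definition of sphere-blind isotopy; the compressing/compressing case is the classical one, and the cases involving bridge disks reduce to it once one tracks how a bridge isotopy across $D_i$ interacts with a disjoint $D_j$.
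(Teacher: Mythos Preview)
Your proposal is correct and takes essentially the same approach as the paper: both argue that the Morse axiom holds because the cells of $\mS(M,H)$ were defined in Section~\ref{hcomsect} precisely as (possibly degenerate) cubes built by projection, with Lemma~\ref{netaxiomlem} supplying the unique local maximum. The paper's proof is a two-sentence pointer to the construction, whereas you unpack that construction and single out the commuting-compressions check (that h-compressing along disjoint $D_1,D_2$ in either order yields sphere-blind isotopic results) as the only substantive content; this is more explicit than the paper, which absorbs that check into the construction itself rather than proving it separately.
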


\begin{proof}
As noted in Lemma~\ref{netaxiomlem}, the complexity decreases along each edge, so each face has a single local maximum. By construction, the 2-cells are quadrilaterals, triangles or bigons and the higher dimensional cells are built from the 2-cells using the fact that given three edges that all cobound 2-cells, the projection of any one 2-cell along the third edge is a 2-cell, an edge or a point.
\end{proof}

\begin{Lem}
The complex $\mS$ satisfies the parallel projection axiom.
\end{Lem}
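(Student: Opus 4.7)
The plan is to verify that for any $2$-cell $q$ in $\mS(M,H)$, each edge of $q$ and its projection across the other edge at the top of $q$ are oriented in the same direction with respect to complexity. Since every $2$-cell is built from a pair of edges below a vertex $v$ whose representing h-disks have disjoint boundaries on the surface $S$ representing $v$, the analysis reduces to studying how the positive/negative side of one h-disk is preserved under h-compression along another disjoint h-disk.

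First, I would set up the local picture. Let $q$ be a $2$-cell with top vertex $v$ represented by a surface $S$ with transverse orientation, and let $e_1,e_2$ be the two edges below $v$ in the boundary of $q$, with representing h-disks $D_1,D_2$ chosen so that $\partial D_1\cap\partial D_2=\emptyset$ in $S$. Let $S_1$ be the surface obtained by h-compressing $S$ along $D_1$; the edge $e_1$ points toward $v$ exactly when $\mathrm{int}(D_1)$ lies on the negative side of $S$. The projection $e_2'$ of $e_2$ across $e_1$ is represented by the same disk $D_2$, now viewed as an h-disk for $S_1$, since its boundary and interior were disjoint from a neighborhood of $D_1\cup\partial D_1$.

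Next, I would establish the key geometric observation: because $\mathrm{int}(D_2)$ is disjoint from $S$, it lies entirely in one complementary region of $M\setminus S$, namely the positive or negative side. The h-compression of $S$ along $D_1$ is supported in an arbitrarily small neighborhood of $D_1$, which can be taken disjoint from $D_2$ (in all three cases: standard compression, bridge compression, and cut compression, the modifying move only affects a neighborhood of $D_1$ and of $\partial D_1$ in $S$ and possibly $\partial H$). Hence $\mathrm{int}(D_2)$ still lies in the same complementary region of $M\setminus S_1$, and as described in Section~\ref{hcomsect} the canonical projection of the transverse orientation on $S$ to $S_1$ assigns this region the same sign. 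Consequently, $D_2$ is on the negative side of $S$ if and only if $D_2$ is on the negative side of $S_1$, which by definition means the edges $e_2$ and $e_2'$ are parallel-oriented. The same argument with the roles of $D_1$ and $D_2$ reversed shows that $e_1$ and its projection $e_1'$ are parallel-oriented.

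Finally, I would observe that in the construction of $\mS(M,H)$ the possible $2$-cells are diamonds, triangles, or bigons, corresponding to whether neither, one, or both of the projected edges collapse to a point; in the degenerate cases the parallel-orientation condition on the remaining non-degenerate pairs follows from exactly the same disjointness argument above. I do not anticipate a serious obstacle here: the only real content is the geometric observation that an h-compression localized at one disk cannot move another disk whose boundary and interior are disjoint from it across the surface, so both sides of the argument are essentially a one-line observation once the setup is in place.
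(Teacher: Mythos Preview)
Your proposal is correct and follows essentially the same approach as the paper: both arguments observe that the orientation of an edge records the side of $S$ containing the h-disk, and that h-compressing along one disk leaves the other disk (or its boundary) on the same side of the resulting surface, so the projected edge is parallel-oriented. Your writeup is simply a more detailed version of the paper's brief argument; the one minor overstatement is that the projected edge is literally represented by the \emph{same} disk $D_2$ (the interiors of $D_1$ and $D_2$ may intersect when they lie on the same side), but since only the side containing the new h-disk matters and this is determined locally near $\partial D_2$, the conclusion is unaffected.
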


\begin{proof}
Let $c$ be a face in $\mS(M, H)$ below a vertex $v$.  The edges of $c$ adjacent to $v$ will point either towards $v$ or away from $v$ depending on which side of the surface the corresponding h-disks reside.  The projections of these edges are defined by the images of each disk after h-compressing across the other disk.  The image will reside on the same side of the new surface as the original disk, so the projected edge will point in the parallel direction.
\end{proof}

\begin{Lem}
The complex $\mS$ satisfies the translation axiom.
\end{Lem}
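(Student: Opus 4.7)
The plan is to exploit the fact that h-disks whose interiors lie on opposite sides of a surface $S$ have disjoint interiors and can have their boundaries made disjoint on $S \cup \partial H$, so that cells built from them can be freely combined.

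The first step is to translate the orientation condition on $q^+$ into a statement about sides of $S_v$. An $n$-cell $q^+$ below its maximum $u^+$ is cut out by $n$ pairwise disjoint h-disks $D_1^+, \ldots, D_n^+$ for $S_{u^+}$, and $v$ is obtained by h-compressing along all of them. Each edge of $q^+$ at $v$ corresponds to the anti-compression of $S_v$ that reverses one of these compressions. Recall the convention that an edge joining $v$ (below) to a neighbor $w_i$ (above) points away from $v$ exactly when the h-disk for $S_{w_i}$ lies on the negative side of $S_{w_i}$. A direct local check shows that, in this case, the corresponding anti-compression of $S_v$ takes place on the positive side of $S_v$: the handle or bridge arc that reverses the compression sits in the closure of $V^+$, while the resulting co-core disk lies on the negative side of the new surface. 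So the hypothesis on $q^+$ produces a family of $n$ anti-compression pieces all living on the positive side of $S_v$, and symmetrically $q^-$ produces $m$ anti-compression pieces on the negative side of $S_v$.

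The second step is to perform all $n+m$ anti-compressions simultaneously. Within each side, the pieces are already pairwise disjoint (since $q^+$ and $q^-$ are already cells, and disjoint h-disks at $u^{\pm}$ give disjoint anti-compression data at $v$). Across sides, the interiors are automatically disjoint because they lie in disjoint components of $M \setminus S_v$, and the boundary arcs on $S_v$ and on $\partial H$ can be separated by a small isotopy. Performing all $n+m$ anti-compressions yields a surface $S_u$ that is closed, strongly separating, and transverse to $H$, so $u$ is a vertex of $\mS(M, H)$. The $n+m$ associated co-core h-disks on $S_u$ are pairwise disjoint and define an $(n+m)$-cell $C$ below $u$. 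By construction, the sub-cell of $C$ spanned by the $n$ positive-side co-cores recovers $q^+$ (compressing along all $m$ negative-side co-cores first returns $S_{u^+}$), and symmetrically for $q^-$. Because positive-side and negative-side operations commute, the cubical combinatorics of $C$ is exactly that of $q^+ \times q^-$, with $q^+ = q^+ \times \{v\}$ and $q^- = \{v\} \times q^-$.

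The main obstacle is the case analysis for the three flavors of h-disks (compression, bridge, and cut), since the geometric form of the anti-compression piece differs in each case and one must verify in each that a positive-side piece and a negative-side piece can be made simultaneously disjoint not just in their interiors but also along their boundary arcs on $S_v$ and on $\partial H$. In all three cases, the piece retracts onto its portion in one closed half-complement of $S_v$, so a small isotopy supported near $S_v$ and $\partial H$ removes any incidental boundary contact. Uniqueness of $C$ up to self-isotopies of $S_u$ then follows because $C$ is determined entirely by the unordered union of the two sets of anti-compression data at $v$, which is read off from $q^+$ and $q^-$ alone.
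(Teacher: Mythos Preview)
Your argument is correct and follows essentially the same strategy as the paper: both construct the top surface of $C$ by starting from a representative $S$ of $v$ and attaching, on opposite sides of $S$, the pieces needed to rebuild $S_{u^+}$ and $S_{u^-}$, using that data living in the positive complement cannot interfere with data in the negative complement. The paper phrases this as positioning $S_+$ and $S_-$ so that $S \setminus S_-$ is disjoint from $S \setminus S_+$ and then splicing, while you phrase it as simultaneous anti-compressions with disjoint supports; these are two descriptions of the same construction, and your co-core language makes the product-cube structure of $C$ a bit more explicit.
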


\begin{proof}
We will mimic the proof in~\cite{axiomatic}.  Let $v \in \mS(M, H)$ be a vertex that is the minimum of two cells $c_+$, $c_-$ on opposite sides of $v$.  Let $v_+$, $v_-$ be the maxima of these cells, with representatives $S_+$, $S_-$.  Because $v$ is the result of repeatedly compressing $S_+$, we can choose a representative $S$ for $v$ that intersects $S_+$ in subsurfaces whose complement in $S_+$ is a collection of disks and punctured disks.  We can choose a similar representative with respect to $S_-$.  Because both representatives are ambient isotopic, we can isotope $S_-$ so that it intersects $S$ in this way.  Moreover, we can isotope $S_-$ so that $S \setminus S_-$ is disjoint from $S \setminus S_+$.  Then the union of $(S_- \cup S_+) \setminus S$ and $S_+ \cap S_-$ is a surface that can be h-compressed down to $S$.  The vertex of $\mS(M,H)$ defined by $S$ is the maximum in a cell containing both $c_-$, $c_+$ so $\mS(M, H)$ satisfies the translation axiom. 

Uniqueness follows from an argument similar to that in~\cite{axiomatic}, and we will leave the details to the reader.
\end{proof}

\section{h-Compression bodies}
\label{boundaryaxiomsect}

In order to show that $\mS(M, H)$ satisfies the Barrier Axiom, we must consider more carefully the submanifolds of $M$ bounded by surfaces in $\mS(M, H)$.  

A \textit{K-graph} in a compression body $H$ is a properly embedded graph consisting of vertical arcs, boundary parallel arcs with their endpoints in $\partial_+ H$ and one-vertex trees that are parallel into $\partial_+ H$.  Following Taylor and Tomova~\cite{tomtay}, we will say that a connected graph is a \textit{vertical pod} if it becomes a K-graph after removing a single edge with an endpoint in $\partial_- H$ such that the union of this edge and any of the remaining edges must form a vertical arc.

We will call a graph $G$ in $H$ an \textit{h-graph} if there is a collection of cut disks in $H$, with respect to $G$ such that for some regular neighborhood $N$ of these cut disks, $G \setminus N$ is a collection of K-graphs and vertical pods for $H \setminus N$.

Given a disjoint union of handlebodies $H \subset M$, we will say that a compression body $C \subset M$ is an \textit{h-compression body} (with respect to $H$) if $\partial C$ is transverse to $H$ and $H \cap C$ is a regular neighborhood in $C$ of an h-graph.

\begin{Lem}
If $E$ is a directed path in $\mS(M, C)$ then there is an h-compression body $H$ whose positive and negative boundaries represent the endpoints of $E$.
\end{Lem}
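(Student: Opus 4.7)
The plan is to induct on the length $n$ of the directed path $E = v_0, v_1, \ldots, v_n$, constructing the h-compression body $H$ by successively adjoining the region swept out by each h-compression. Since $E$ is directed, the orientation conventions of Section~\ref{hcomsect} force each h-disk representing an edge $v_i \to v_{i+1}$ to lie on one preferred side of the surface representing $v_i$, and $H$ will be built on the union of these sides.

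For the base case $n = 0$, I would take $H$ to be a collar $S_0 \times [0,1]$ of the surface $S_0$ representing $v_0$, so that both $\partial_+ H$ and $\partial_- H$ represent $v_0$. Since $S_0$ is transverse to $C$, the intersection $S_0 \cap C$ is a collection of essential disks, and $H \cap C$ is a thickening of these disks, which is a regular neighborhood in $H$ of a disjoint union of vertical arcs --- a K-graph. For the inductive step, assume the first $n-1$ vertices are realized by an h-compression body $H'$ with $\partial_+ H' = S_0$ and $\partial_- H' = S_{n-1}$. I would extend $H'$ to $H$ by adjoining the region between $S_{n-1}$ and $S_n$ swept out by the final h-compression: a 2-handle in the compression and cut compression cases, and a collar of a bridge disk in the bridge compression case. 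Standard compression body theory then shows $H$ is a compression body with $\partial_+ H = S_0$ and $\partial_- H = S_n$.

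The technical core of the argument is verifying that $H \cap C$ remains a regular neighborhood in $H$ of an h-graph. A standard compression uses an h-disk disjoint from $C$, so $H \cap C$ is unchanged up to isotopy and the previous h-graph persists. A cut compression attaches a 2-handle meeting $C$ in an essential disk; I would add this disk to the collection of cut disks witnessing the h-graph property and check that after cutting along the enlarged collection each component on either side is still a K-graph or a vertical pod. A bridge compression adjoins a collar of a bridge disk, contributing a thickened boundary-parallel arc to $H \cap C$; if this arc is disjoint from the existing graph it is a new boundary-parallel K-graph component, and if it attaches at an interior point of a vertical arc it produces a vertical pod, both of which are allowed by the definition.

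The main obstacle I anticipate is the bridge-compression case, since a newly adjoined boundary-parallel arc may attach to the previously built h-graph in configurations that do not immediately fall into the permitted types --- for instance several stacked bridges whose composite is not manifestly a K-graph or vertical pod. I expect to handle this by reorganizing the collection of cut disks after each step, occasionally introducing additional cut disks from disks of $S_{n-1} \cap \partial C$ that become essential only after the new compression, and by an innermost-disk argument that reduces each configuration to one of the minimal forms explicitly allowed by the definitions of K-graph and vertical pod.
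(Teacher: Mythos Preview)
Your approach is essentially the same as the paper's: induct on the length of the directed path, analyze the region cobounded by $S_{i}$ and $S_{i+1}$ for each of the three types of h-disk, and then stack. The paper's single-edge analysis matches yours (vertical arcs for a compression, a boundary-parallel arc or a vertical pod for a bridge compression, vertical arcs after removing a neighborhood of the cut disk for a cut compression), and the paper then simply asserts that the union of two h-compression bodies along $\partial_+ C = \partial_- C'$ is again an h-compression body, which is exactly the inductive gluing you describe.

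Where you and the paper differ is in how the stacking step is handled. You try to track the h-graph explicitly through each compression and worry, correctly, that repeated bridge compressions could produce trees not of the allowed shape. The paper sidesteps this by packaging the issue into the gluing statement above and not elaborating. The clean resolution of your obstacle is to observe that the disks of the intermediate surface $S_{n-1}\cap C$ sit inside the glued compression body as properly embedded disks meeting the graph transversely in single points, and hence serve as additional cut disks in the sense of the definition of an h-graph. Cutting along all of these decomposes the glued body back into the single-edge pieces you have already analyzed, each of which carries a K-graph or vertical pod; this is precisely what the definition of h-graph demands. So your proposed remedy of ``introducing additional cut disks from disks of $S_{n-1}\cap \partial C$'' is the right idea, and no innermost-disk argument is needed.
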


\begin{proof}
Each edge in $\mS(M, H)$ corresponds to either a compression disk, a bridge disk or a cut disk $D$ of a surface $S$ producing a surface $S'$.  Between $S$ and $S'$, there is either a compression body or a surface cross an interval (which is technically also a compression body.)  For $D$ a compression disk, $H$ intersects this compression body in a regular neighborhood of vertical arcs.  For a bridge disk, this compression body intersects $S$ in the neighborhood of a collection of vertical edges and either an arc parallel into the positive boundary or the union of such an arc and a vertical arc, i.e. a pod handle.  

If $D$ is a cut disk then if we remove a neighborhood of $D$ from the compression body, the result will intersect $H$ in a neighborhood of a collection of vertical arcs.  Thus the compression body is again an h-compression body with respect to $H$.

If $C$ and $C'$ are h-compression bodies such that $\partial_+ C = \partial_- C$ then $C \cup C'$ is an h-compression body.  Thus if two paths determine h-compression bodies and their union is directed and decreasing then their union determines an h-compression body.  Above, we checked that the Lemma is true for a single edge, so it is true in general by induction. 
\end{proof}

The \textit{descending link} of a vertex $v \in \mS(M, H)$ is the portion of the link spanned by the edges below $v$.  We will say that $v$ has \textit{index zero} if its descending link is empty, i.e. there are no vertices below $v$.  The vertex $v$ will have \textit{index one} if its descending link is disconnected.  The set of h-disks on a given side of a surface $S$ representing $v$ form a connected (or empty) set.  Thus $v$ will have index one if and only if $S$ has h-disks on both sides and every h-disk on one side intersects every h-disk on the other side.  This condition is often called strongly irreducible (or weakly incompressible). 

We can also define higher index surfaces in terms of the homotopy groups of their descending links, but for this paper, we will not need this generalization.

Note that if $X$ is an h-compression body and $S$ is the result of h-compressing the positive boundary of $X$ some number of times then within $X$, $S$ will have h-disks on only one side, and thus cannot have index one.  We will use this fact below.

\begin{Lem}
\label{pathtohbodylem}
Two monotonic, directed paths in $\mS(M, C)$ determine blind isotopic H-compression bodies if and only if they are equivalent, i.e. related by horizontal slides.
\end{Lem}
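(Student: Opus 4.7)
The plan is to prove the two directions separately. The forward implication, that equivalent paths determine sphere-blind isotopic h-compression bodies, is the easier one. I would induct on the number of horizontal slides relating $E$ and $E'$. For a single horizontal slide, the move replaces a subpath $u \to s \to w$ by $u \to s' \to w$ across a 2-cell whose top and bottom vertices are $u$ and $w$. Such a 2-cell is defined by two h-disks $D, D'$ for a representative of $u$ with disjoint boundaries, and $w$ is the class of the surface obtained by h-compressing along both. The two h-compressions commute up to sphere-blind isotopy, so the intermediate h-compression body swept out between $u$ and $w$ along either side of the 2-cell is the same up to sphere-blind isotopy. Concatenating these elementary pieces along the rest of the path, using the concatenation property in the preceding lemma, shows that $H_E$ and $H_{E'}$ are sphere-blind isotopic.

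For the converse, suppose $E$ and $E'$ are monotonic directed paths from $v$ to $v'$ whose h-compression bodies $X_E, X_{E'}$ are sphere-blind isotopic. After applying a sphere-blind isotopy, I may assume $X_E = X_{E'} = X$ as submanifolds of $M$, with matching positive and negative boundaries and the same intersection with $C$. Each directed edge of $E$ corresponds to an h-disk for an intermediate surface; pushing these disks back through the elementary product regions arising at each compression lets me realize $E$ as a disjoint collection $\mathcal{D}_E$ of h-disks properly embedded in $X$ with boundaries on $\partial_+ X$ that cut $X$, together with its h-graph structure, into a product $\partial_- X \times I$ plus the h-graph. Similarly $E'$ determines a disjoint collection $\mathcal{D}_{E'}$.

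The crux is then a two-step argument. First, I would show that $\mathcal{D}_E$ and $\mathcal{D}_{E'}$ can be made to coincide as unordered collections by a sequence of disk slides, each realized by a horizontal slide at the level of paths. Put the two systems in general position inside $X$ and use an innermost-disk / outermost-arc argument in the disks of one collection relative to the other; each elementary reduction swaps the order of two consecutive h-compressions for a common intermediate surface, which is exactly a horizontal slide across the 2-cell determined by the relevant pair of disjoint h-disks. Once $\mathcal{D}_E = \mathcal{D}_{E'}$ as sets, the two paths differ only in the ordering in which the h-disks are compressed, and any two orderings of commuting h-compressions along pairwise disjoint disks are related by adjacent transpositions, each a horizontal slide across the 2-cell of two adjacent disks.

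The main obstacle is the mixed presence of compressing, bridge, and cut disks, together with the h-graph bookkeeping that accompanies cut disks. Cut disks intersect $C$ nontrivially and contribute edges to the h-graph of the h-compression body, so any slide on $\mathcal{D}_E$ must preserve the K-graph/vertical-pod structure used to define an h-compression body. I expect to handle this by performing the slides inside the h-graph picture: each elementary disk slide corresponds to an edge slide of the spine in the underlying compression body, and the classification of compression body Heegaard splittings invoked for Lei's lemma should guarantee that such slides generate the equivalence of complete h-disk systems. A secondary technicality is that the identification $X_E = X_{E'}$ is only up to the three sphere-blind moves; since those moves preserve the h-compression body structure away from trivial spheres and tubings into $C$, they can be absorbed into the vertex classes without forcing any new edges into the paths.
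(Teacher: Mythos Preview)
Your forward direction matches the paper's.  For the converse, however, the paper takes a much shorter and more abstract route than your hands-on disk-system argument.  The paper simply observes that any surface $S$ obtained by h-compressing $\partial_+ X$ inside a fixed h-compression body $X$ has h-disks only on one side; since the h-disks on a given side always form a connected subset of the descending link, $S$ is never index one.  Hence no monotonic path determining $X$ passes through an index-one vertex, and the paper then invokes a general result from the axiomatic thin-position framework (\cite{axiomatic}) to conclude that any two such paths are equivalent.  This avoids entirely the combinatorics of bridge and cut disks that you identify as the main obstacle.

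Your approach is plausible in outline, but two points deserve care.  First, an innermost-disk or outermost-arc reduction between two disk systems does not merely \emph{swap} two consecutive h-compressions; it replaces a disk by a band sum, and you would need to check that such a band sum is realized by a (short) sequence of horizontal slides rather than a single one.  Second, extending the classical ``any two complete disk systems for a compression body are slide equivalent'' to h-compression bodies---with their mixture of compressing, bridge, and cut disks, and the attendant h-graph bookkeeping---is exactly the delicate point you flag, and it is not covered by Lei's lemma or the Scharlemann--Thompson classification directly.  The paper's index argument sidesteps both issues entirely, which is what it buys; your approach, if carried through, would yield a more self-contained argument independent of the machinery in \cite{axiomatic}.
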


\begin{proof}
It is straightforward to check that if two paths are related by a horizontal slide then they determine the same h-compression body.  Equivalent paths are related by a sequence of horizontal slides and thus define the same h-compression body by induction.

For the converse, note that given an h-compression body $X$ and a surface $S \subset X$ that results from h-compressing $\partial_+ X$ some number of times, then $S$ is h-compressible to exactly one side, so as noted above, $S$ is not an index-one surface.  This implies that no path determining the compression body $X$ can pass through an index-one vertex.  As noted in~\cite{axiomatic}, this implies that if two decreasing paths determine the same compression body then they are equivalent.  
\end{proof}

\begin{Lem}
The complex $\mS$ satisfies the barrier axiom.
\end{Lem}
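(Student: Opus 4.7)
The plan is to construct $v_+$ (and symmetrically $v_-$) by h-compressing the representative of $v$ on the positive side as many times as possible, and then to appeal to Lemma~\ref{pathtohbodylem} to promote set-level uniqueness of the resulting h-compression body into path-level uniqueness up to horizontal slides.

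First I would fix a representative $S$ of $v$ and pick any decreasing directed path starting at $v$. By the Net Axiom, every such path has bounded length, so it extends to a maximal decreasing directed path $E_+$ ending at some vertex $v_+$ represented by a surface $S_+$. By the edge-orientation convention, the h-disks used along $E_+$ all lie on the positive side of $S$; stacking the product-plus-handle regions swept out by the successive h-compressions produces an h-compression body $C_+ \subset M$ with $\partial_- C_+ = S$ and $\partial_+ C_+ = S_+$. Since $E_+$ cannot be extended, $S_+$ admits no h-disk inside $C_+$ on its positive side.

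Next I would argue that $C_+$ is determined by $v$ up to sphere-blind isotopy, independently of the choice of maximal extension. Given a second maximal decreasing directed path from $v$ with associated h-compression body $C'_+$, I would isotope the two defining families of h-disks to intersect minimally and then use an innermost-disk and outermost-arc exchange argument — now applied simultaneously to compressing, bridge, and cut disks — to show that one collection can be shuffled into the other while preserving the ambient h-compression body. Once $C_+$ and $C'_+$ are seen to be sphere-blind isotopic, Lemma~\ref{pathtohbodylem} gives that the two maximal paths are equivalent, so the equivalence class of $E_+$ is well defined.

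To close out the axiom I would observe that any decreasing directed path $E$ from $v$ itself assembles, via the same construction, into an h-compression body $C$ sitting on the positive side of $S$, and that if $E$ is not yet maximal its positive boundary still admits an h-disk along which we can h-compress. Iterating this extension produces a maximal directed path whose associated h-compression body is sphere-blind isotopic to $C_+$, and Lemma~\ref{pathtohbodylem} then shows this extension is equivalent to $E_+$. The construction of $v_-$ and $E_-$ from reverse-directed paths is completely parallel, using h-disks on the negative side of $S$. The main obstacle is the uniqueness step in the third paragraph: termination and extension are cheap consequences of the Net Axiom, but arranging two maximal families of mixed compressing, bridge, and cut disks to be disjoint requires careful tracking of how each disk type meets $H$, in particular how cut disks — whose interiors intersect $H$ in an essential disk — can be exchanged past compressing and bridge disks without destroying either one or producing spurious intersections with $H$.
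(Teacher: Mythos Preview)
Your overall architecture matches the paper's: build an h-compression body from a maximal descending directed path, show that any two such bodies coincide up to sphere-blind isotopy, and then invoke Lemma~\ref{pathtohbodylem} to conclude equivalence of paths. (A small notational slip: by the usual convention, $S$ is the \emph{positive} boundary of the h-compression body and $S_+$ the negative boundary, not the reverse.)

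The divergence is entirely in the uniqueness step, and this is where your proposal is weaker. You plan to make the two maximal families of h-disks disjoint by an innermost/outermost exchange argument, and you correctly flag that handling the three disk types simultaneously---especially cut disks, which meet $H$ nontrivially---is delicate. The paper bypasses this disk-by-disk analysis entirely. Instead, it uses the maximality of $E_+$ directly: since the negative boundary $\partial_- C_+$ of the first h-compression body admits no h-disk on the positive side, every h-disk used along the second path $E'$ can be isotoped to lie inside $C_+$, so the second h-compression body $C'_+$ sits inside $C_+$. Then, since $\partial_- C'_+$ is likewise h-incompressible, it must be parallel to $\partial_- C_+$, forcing $C'_+$ and $C_+$ to be isotopic. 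This is the ``barrier'' that gives the axiom its name: the incompressible negative boundary acts as a one-way membrane that traps all positive-side h-disks.

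So your proof is not wrong in spirit, but the hard step you identify is simply unnecessary. Replacing your third paragraph with the containment-and-parallelism argument above gives a complete proof with no disk-exchange bookkeeping at all.
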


\begin{proof}
By the net axiom, there is some finite length, descending directed path $E$ below any vertex $v$ such that $E$ ends at a vertex $v_+$ that is incompressible to the positive side. If $E'$ is a second directed, descending path from $v$, we can extend it to a finite path that also ends at a vertex that is incompressible to the positive side. 

Let $S$ be a surface representing $v$ and let $H$ be the h-compression body determined by $E$, with $S = \partial_+ H$.  Because the negative boundary of $H$ is h-incompressible, each of the h-disks for $S$ defined by the path $E'$ can be isotoped into $H$.  Thus the h-compression body $H'$ determined by $E'$ can be isotoped into $H$.  Because $\partial_- H'$ is h-incompressible, $\partial_- H'$ must be parallel to $\partial_- H$, so the two h-compression bodies are isotopic.  By Lemma~\ref{pathtohbodylem}, this implies that $E$ and $E'$ are equivalent paths.
\end{proof}

\section{The Casson-Gordon axiom}
\label{cassongordonsect}

The final axiom left to check is the Casson-Gordon axiom.  This axiom requires that certain path links in $\mS(M, H)$ be contractible.  Since every path link is a simplicial complex, this is equivalent to the statement that every homotopy group of the path link is trivial.  We first note the following property of h-compressible surfaces:

\begin{Lem}
\label{findcompressionlem}
If $S$ is an h-compressible surface in $(M, H)$ then $S$ admits either a compressing disk or a cut disk.
\end{Lem}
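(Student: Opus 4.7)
I would begin by reducing to the one case requiring work: the h-disk is a bridge disk $D$ with $\partial D = \alpha \cup \beta$, $\alpha \subset \partial H$ and $\beta \subset S$, connecting distinct components $d_1, d_2$ of $S \cap H$. Let $\Delta \subset S$ be a regular neighborhood of $d_1 \cup \beta \cup d_2$ (a disk in $S$), and set $\gamma := \partial \Delta$; equivalently, $\gamma$ is the band-sum of $\partial d_1$ and $\partial d_2$ along $\beta$, a simple loop in $S \setminus H$. The goal is to produce a compressing disk or a cut disk whose boundary is (isotopic to) $\gamma$.

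First I would check that $\gamma$ is essential in $S \setminus H$. On the $\Delta$-side, $\gamma$ cobounds the pair of pants $\Delta \setminus \mathrm{int}(d_1 \cup d_2)$, which is not a disk. If $\gamma$ bounded a disk on the other side in $S \setminus H$, then the component of $S$ containing $\beta$ would be $\Delta$ capped off by that disk, a 2-sphere meeting $H$ in exactly $d_1 \cup d_2$---a twice-punctured sphere component, which the definition of a bridge disk explicitly forbids. This is the only place that uses the full bridge-disk hypothesis beyond the mere existence of $D$.

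Next, let $B$ be a thin rectangular neighborhood of $\alpha$ in $\partial H$ whose short sides lie on $\partial d_1, \partial d_2$ and which is otherwise disjoint from $\bigcup_i \partial d_i$, and form the (piecewise smooth) disk $E := d_1 \cup B \cup d_2$ with $\partial E \subset \partial H$ the band-sum of $\partial d_1, \partial d_2$ along $\alpha$. A small push-off into $\mathrm{int}(H)$ turns $E$ into a properly embedded disk $E^* \subset H$ with $\partial E^*$ parallel to $\partial E$ and with $E^* \cap S = \emptyset$ (using that a generic parallel copy of each $d_i$ can be chosen disjoint from every $d_j$). Simultaneously, I would assemble an annulus $A \subset M \setminus H$ with boundary $\gamma \cup \partial E^*$ and interior disjoint from $S$, built out of thin strips in the $\partial H$-collar near $\partial d_1, \partial d_2$ together with two parallel push-offs of $D$ on either side; concretely, $A$ is the annular region of $\partial N_M(\Delta \cup D) \cap (M \setminus H)$.

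The proof then splits on whether $\partial E$ is essential in $\partial H$. If it is, then $E^*$ is an essential disk in $H$ and $C := E^* \cup A$ meets $H$ in the single essential disk $E^*$, has boundary $\gamma$ (essential in $S \setminus H$), and has interior disjoint from $S$---a cut disk. If instead $\partial E$ bounds a disk $F \subset \partial H$, an innermost-curve argument shows $\mathrm{int}(F)$ contains no $\partial d_j$: such a $\partial d_j$ would bound a subdisk of $\partial H$, contradicting essentiality of $d_j$ in $H$. Then $F \cup A$, pushed slightly off $\partial H$ into $M \setminus H$, is a compressing disk for $S$. The main obstacle is the careful construction of $A$ with correct boundary and disjointness properties; everything else is case analysis plus the essentiality argument above.
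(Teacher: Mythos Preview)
Your proof is correct and follows essentially the same approach as the paper: both band-sum $\partial d_1$ and $\partial d_2$ along the bridge arc to obtain the loop $\gamma$, build a disk with boundary $\gamma$ meeting $H$ in the single disk given by the band-sum of $d_1,d_2$, and then split on whether that disk is essential in $H$. The paper describes this disk in one stroke as a push-off of a neighborhood $N\subset S$ across $D$, while you assemble the same object from $E^*\subset H$ and the exterior annulus $A$; your treatment of the inessential case (arguing that no $\partial d_j$ can lie in $\mathrm{int}(F)$) is slightly more explicit than the paper's.
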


\begin{proof}
We need only check that if $S$ has a bridge disk then it has either a compressing disk or a cut disk.  Let $D$ be a bridge disk for $S$ and let $\alpha$ be the arc $D \cap S$.  By definition, the endpoints of $\alpha$ are in distinct loops $\ell_1, \ell_2$ in $S \cap \partial H$.  Moreover, these loops bound disk $D_1, D_2$ of $S \cap H$.  Let $N$ be a regular neighborhood in $S$ of the union $D_1 \cup D_2 \cup \alpha$.  Because $D_1$ and $D_2$ are disjoint, $N$ is a disk.  Let $D'$ be the result of pushing the interior of $N$ off of $S$ on the side containing $D$, then pushing the resulting disk across $D$ so that it intersects $H$ in a single disk, as in Figure~\ref{bridgetocutfig}.  
\begin{figure}[htb]
  \begin{center}
  \includegraphics[width=2in]{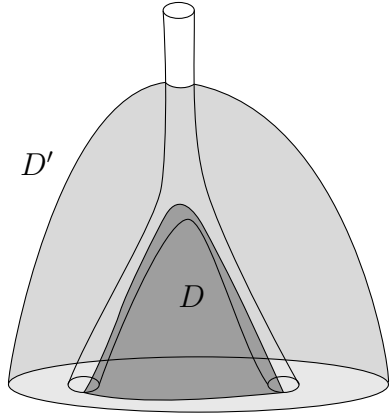}
  \put(-80,40){$D$}
  \put(-140,90){$D'$}
  \caption{Every bridge disk (shaded dark grey) is surrounded by a compression disk or a cut disk (shaded lighter grey).}
  \label{bridgetocutfig}
  \end{center}
\end{figure}

If the loop $D' \cap \partial H$ is essential in $\partial H$ then $D'$ is a cut disk for $S$.  Otherwise, we will isotope the disk further to remove the loop of intersection and create a compressing disk.  By assumption the arc $D \cap S$ is not in a twice-punctured sphere component of $S$, so $D' \cap S$ is essential and $D'$ is a cut disk or a compressing disk for $S$.
\end{proof}

This allows us to deal only with compressions and cut compressions in the following:

\begin{Lem}
\label{contrlinklem}
Assume $M'$ is the submanifold between consecutive thin levels $S_-$, $S_+$ of a path in $\mS(M, H)$ and $S$ is the thick surface between these thin levels.  If $S_-$ or $S_+$ is h-compressible into $M'$ then the path link of $S$ is $n$-connected for every integer $n$.
\end{Lem}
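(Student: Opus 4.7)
The plan is to exhibit a cone-point vertex in the path link of $v$; since a simplicial cone is contractible, this will immediately yield $n$-connectivity for every $n$.

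First I will reduce to compression and cut disks. By Lemma~\ref{findcompressionlem}, the witnessing h-disk $D_0$ for $S_-$ may be taken to be either a compressing disk or a cut disk. Write $X_-$, $X_+$ for the h-compression bodies with $\partial_+ X_{\pm} = S$ and $\partial_- X_{\pm} = S_{\pm}$ inside $M'$, so $M' = X_- \cup_S X_+$. Since $S_-$ bounds only $X_-$ within $M'$, the disk $D_0$ lies in $X_-$; moreover, because $\partial_- X_-$ is standardly incompressible in the compression body $X_-$, $D_0$ must in fact be a cut disk.

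Next I will produce the candidate cone point. Let $D_1, \ldots, D_k$ be the h-disks realizing the descent from $S$ down to $S_-$ along the segment of $E$ from $v$ to $v_-$. Applying innermost-disk and outermost-arc surgery to $D_0$ using the traces of the $D_i$ in $X_-$, and exploiting the product-plus-$1$-handles structure of $X_-$ together with the cut-disk system of its h-graph, I will push $D_0$ upward to a cut disk $D_-$ for $S$ contained in $X_-$ and disjoint from every $D_i$. This disjointness will let me insert $D_-$ into $E$ by horizontal slides, so the isotopy class of $D_-$ is a vertex of the path link.

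Finally I will verify the cone-point property: $D_-$ can be made disjoint from any h-disk $D'$ whose class is a vertex of the path link. When $D' \subset X_+$, the interiors of $D_-$ and $D'$ are automatically disjoint, and only the boundary loops on $S$ need to be separated; since $\partial D_-$ was created by pushing $\partial D_0$ through $X_-$ up to $S$, it can be arranged to avoid $\partial D'$ by a small isotopy. When $D' \subset X_-$, the disk $D'$ appears as an h-disk for some surface in the descent of $S$ inside $X_-$; standard compression-body disk-surgery arguments, using the incompressibility of $\partial_- X_-$, then produce disjoint representatives. The main obstacle will be this second case: organizing the push-down of every path-link h-disk on the negative side uniformly across compression, cut, and (via Lemma~\ref{findcompressionlem}) bridge disks, while preserving disjointness under the horizontal-slide equivalence defining the path link. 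Once this is established, $D_- * \sigma$ is a simplex of the path link for every simplex $\sigma$, so the path link is a simplicial cone with apex $D_-$, hence $n$-connected for every $n$.
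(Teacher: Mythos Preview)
Your approach has a basic geometric error and a resulting strategic gap.

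\textbf{The error.} You assert that since $S_-$ bounds only $X_-$ inside $M'$, the disk $D_0$ lies in $X_-$; and then that incompressibility of $\partial_- X_-$ forces $D_0$ to be a cut disk. Both statements are wrong. The boundary $\partial D_0$ is on $S_-$, but the interior of $D_0$ lies in $M' = X_- \cup_S X_+$ and is under no obligation to stay in $X_-$. In fact, the opposite is true: because the negative boundary of the h-compression body $X_-$ is h-\emph{in}compressible (not just standardly incompressible --- this rules out cut disks as well), $D_0$ \emph{cannot} lie entirely in $X_-$; it must cross $S$. This intersection $D_0 \cap S$ is the whole engine of the argument, and your setup throws it away.

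\textbf{The gap.} Your cone-point strategy then collapses. You want a single h-disk $D_-$ for $S$ that is simultaneously disjoinable from every vertex of the path link. But the candidate $D_-$ one naturally produces --- an innermost subdisk of $D_0$ cut off by $D_0 \cap S$ --- depends on the particular embedding of $S$, and isotoping $S$ to separate $D_-$ from one path-link disk $D'$ will typically change which subdisk of $D_0$ is innermost. The phrase ``standard compression-body disk-surgery arguments'' does not supply a uniform representative, and in general no single cone point exists.

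The paper's argument is genuinely different: it does not look for a cone point. Instead, fixing $D = D_0$, it takes an arbitrary immersed $n$-sphere $\Phi$ in the path link and, for each simplex of $\Phi$, builds a cube of surfaces isotopic to $S$ by successively eliminating intersections of the associated h-disks with $D$. These cubes assemble into a ball $B$ (the ``Bachman ball''). Because $S$ can never be made disjoint from $D$ (by the h-incompressibility noted above), every surface indexed by a vertex of $B$ meets $D$ in an essential loop, and an innermost such loop bounds an h-disk for that surface contained in $D$. This assignment extends over the triangulated $B$ to a continuous map into the descending link, exhibiting a null-homotopy of $\Phi$. The point is that the innermost-disk choice is allowed to vary from vertex to vertex of $B$; contractibility comes from filling spheres one at a time, not from a global cone.
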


The proof of this Lemma is almost identical to the proof of Lemma~10.1 in~\cite{axiomatic}, so we will only present the outline here.  The reader can refer to the original paper for details.

Without loss of generality, assume that $S_-$ is compressible into $M'$ and let $D$ be an h-disk for $S_-$.  By Lemma~\ref{findcompressionlem}, assume $D$ is either a compressing disk or a cut disk.  Let $\Phi$ be an immersed $n$-sphere in the path link $L_v$ for $S$.  Each vertex corresponds to an h-disk $E$ for $S$ and defines a sequence of surfaces isotopic to $S$ that result from shrinking $E$ so as to remove components of $E \cap D$, one at a time.  Each $n$-cell in $\Phi$ defines a cube of surfaces defined in this way, and the union of these cubes forms a ball $B$.

There is a natural way to triangulate each of these cubes that defines a triangulation for $B$ and we can extend the triangulation so that its boundary can be naturally associated with the $n$-sphere $\Phi$.  Following~\cite{axiomatic}, we will call the resulting complex a \textit{Bachman ball}.  

Each vertex in the ball represents a surface in $M'$ isotopic to $S$.  Because the negative boundary of an h-compression body is h-incompressible, the surface $S$ cannot be made disjoint from $D$.  Thus the intersection of each surface with $D$ must contain one of more loops that are essential in $S$. An innermost (in $D$) such loop will be the boundary of an h-disk contained in $D$.  By choosing one of these disks for each vertex of $B$, we get a map from the vertices of $B$ to the vertices of the descending link of $S$.  Moreover, because of the way we triangulated $B$, this map extends to a continuous map from all of $B$ into the descending link, implying that the immersed sphere $\Phi$ is homotopy trivial.  Since $\Phi$ was arbitrary, this implies that the descending link is has trivial $n$th homotopy group.

\begin{Coro}
The complex $\mS$ satisfies the Casson-Gordon axiom.
\end{Coro}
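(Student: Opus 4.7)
The plan is to derive this corollary as an almost immediate consequence of Lemma~\ref{contrlinklem}. The first step is to reconcile the orientation conventions used in the Casson-Gordon axiom with the hypothesis of the lemma. Recall that a descending edge at $v$ is oriented toward $v$ exactly when its h-disk lies on the negative side of $S$, and away from $v$ exactly when it lies on the positive side. In an oriented path with $v$ a local maximum sandwiched between minima $v_-$ and $v_+$, the edge from $v$ down to $v_-$ then corresponds to an h-disk on the negative side of $S$, so the h-compression body joining $S_-$ to $S$ sits on the negative side of $S$ and on the positive side of $S_-$. Dually, the compression body between $S$ and $S_+$ sits on the positive side of $S$ and on the negative side of $S_+$. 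Consequently, ``$v_-$ is compressible to the positive side'' is synonymous with ``$S_-$ admits an h-disk into $M'$'', and ``$v_+$ is compressible to the negative side'' is synonymous with ``$S_+$ admits an h-disk into $M'$''.

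Granted this translation, both clauses of the axiom reduce to one observation. Suppose first that $v_-$ is compressible to the positive side. Then $S_-$ is h-compressible into $M'$, and Lemma~\ref{contrlinklem} asserts that the path link of $v$ is $n$-connected for every integer $n$. As already emphasized in the preamble to this section, the path link is a simplicial complex, so Whitehead's theorem applied to a CW complex whose homotopy groups all vanish forces it to be contractible. This is already the first alternative in the axiom's conclusion, irrespective of any condition on $v_+$. The symmetric case, in which $v_+$ is compressible to the negative side, is handled identically by applying the lemma to $S_+$ in place of $S_-$.

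The only step that really demands care in this plan is the bookkeeping for orientation conventions---one needs to check, by induction along the sequence of h-compressions descending from $S$ to $S_-$ (and to $S_+$), that the transverse orientation propagates so that $M'$ really does lie on the positive side of $S_-$ and the negative side of $S_+$. Once that is settled, the corollary follows from Lemma~\ref{contrlinklem} together with Whitehead's theorem without any further topological input.
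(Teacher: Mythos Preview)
Your argument has a genuine gap in the orientation bookkeeping, and it is not merely cosmetic. You assert that ``$v_-$ is compressible to the positive side'' is \emph{synonymous} with ``$S_-$ admits an h-disk into $M'$''. This is not right. The positive side of $S_-$ is the entire positive component of $M \setminus S_-$, which contains $M'$ but in general extends past $S_+$ to the remaining thick and thin levels of the path. An h-disk for $S_-$ on its positive side therefore need not lie in $M'$; it may intersect $S_+$. Nothing in your inductive propagation of the transverse orientation rules this out, because the issue is not about orientation at all but about where the disk sits relative to $S_+$.

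This is exactly why the axiom has a disjunction in its conclusion, and your proof should have made you suspicious: you claim to deduce unconditionally that the path link is contractible, never using the alternative ``$v_+$ is compressible to the positive side''. The paper's proof supplies the missing dichotomy. One takes a compressing or cut disk $D$ for $S_-$ on its positive side (using Lemma~\ref{findcompressionlem}). If $D$ cannot be made disjoint from $S_+$, then the intersection produces an h-disk for $S_+$ on its positive side, giving the second alternative. If $D$ can be isotoped off $S_+$, then $D \subset M'$ and Lemma~\ref{contrlinklem} applies to give contractibility. You need this case split; without it the argument does not go through.
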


\begin{proof}
Given $v_-$, $v_+$ and $v$ as in the axiom, the descending paths from $v$ determine the submanifold $M'$.  Without loss of generality, assume the positive descending link of $v_-$ is not empty.  Then by Lemma~\ref{findcompressionlem}, it has either a compressing disk or a cut disk on its positive side.  If this disk intersects the surface represented by $v_+$ non-trivially then $v_+$ is compressible to the positive side and the proof is complete.  Otherwise, we can make the disk disjoint from the other surface, so that $D$ is contained entirely in $M'$.  Then by Lemma~\ref{contrlinklem}, every homotopy group of the path link of $S$ is trivial, so the path link is contractible.
\end{proof}

\section{Interpreting $\mS(M, H)$}

We have been interested in the complex of surfaces $\mS(M, H)$ relative to a pair of handlebodies $H$.  However, there is a much more simple complex $\mS(M) = \mS(M, \emptyset)$ that does not consider handlebodies at all.  In this complex, vertices are simply (sphere blind) isotopy classes of surfaces in $M$, and edges correspond to compressions.

Because vertices of $\mS(M)$ correspond to transversely oriented surfaces, there are two vertices in $\mS(M)$ corresponding to the empty surface.  For one of these vertices, $v_-$, the manifold $M$ is labeled with a $+$ and for the other, $v_+$, it is labeled with a $-$. An oriented path from $v_-$ to $v_+$ with a single maximum $v$ defines a Heegaard splitting because the path from $v$ to $v_+$ determines one handlebody and the reverse path from $v$ to $v_-$ determines a second handlebody.  A connected surface representing $v$ is the Heegaard surface for this splitting.  Following~\cite{axiomatic}, we call any path with endpoints $v_-, v_+$ and a single maximum a \textit{Heegaard path}.

There is a canonical map $\mS(M, H) \rightarrow \mS(M)$ that takes each surface transverse to $H$ to itself in $M$.  This map crushes each edge corresponding to a bridge disk down to a single point, as well as all the edges corresponding to compressions and cut compressions that separate a planar surface from a non-planar component.  Moreover, this map extends to the higher dimensional cells of $\mS(M, H)$ and preserves the relative complexities of the endpoints of the remaining edges, so it is a \textit{height homomorphism}, as defined in~\cite{axiomatic}.

Any two vertices in $\mS(M, H)$ representing the same vertex in $\mS(M)$ are related by a sequence of bridge compressions (and their inverses) that correspond to an isotopy from one representative to the other.  This idea can be extended to the following Lemma, which is left as an exercise for the reader:

\begin{Lem}
\label{reprepath}
For any path in $\mS(M)$, there is a path in $\mS(M, H)$ that is mapped onto the path.  Moreover, if the path in $\mS(M)$ is oriented then the path in $\mS(M, H)$ can be chosen to be oriented.
\end{Lem}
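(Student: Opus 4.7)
My plan is to proceed by induction on the length $n$ of the path in $\mS(M)$.

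For the base case $n=0$, a single vertex $v$, I choose any representative surface $S$ for $v$ and perturb it into general position with respect to $H$; after further isotopy to make each component of $S\cap H$ an essential disk for $H$, its sphere-blind isotopy class is a lift $\tilde v \in \mS(M,H)$ mapping to $v$.

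For the inductive step, suppose the initial subpath $v_0 \to \cdots \to v_k$ has been lifted to a path ending at some $\tilde v_k \in \mS(M,H)$, and the next edge $v_k \to v_{k+1}$ in $\mS(M)$ compresses a representative $S$ of $v_k$ along a disk $D\subset M$. Pick a representative $\tilde S$ of $\tilde v_k$ that is transverse to $H$ and ambient isotopic to $S$, transport $D$ by the same isotopy, and then perturb it into general position with respect to $H$. Because $\partial D \subset \tilde S\setminus H$ is disjoint from $\partial H$, the intersection $D\cap \partial H$ consists of loops in the interior of $D$. I reduce $|D\cap\partial H|$ to zero by a finite sequence of innermost-loop moves: an innermost loop $\ell$ bounds a subdisk $\delta\subset D$ whose interior is disjoint from $\partial H$. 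If $\ell$ is inessential in $\partial H$, a small isotopy of $D$ across the subdisk of $\partial H$ bounded by $\ell$ reduces the intersection without altering $\tilde S$. Otherwise $\delta$ is a disk disjoint from $\tilde S$ whose boundary is essential in $\partial H$, and we use $\delta$ (possibly combined with an adjacent local piece of $D$) as a cut, compressing, or bridge h-disk to h-compress $\tilde S$. Each such h-compression is an edge of $\mS(M,H)$ that the canonical projection sends to a point of $\mS(M)$, since it either changes $\tilde S$ only up to sphere-blind isotopy in $M$ or splits off a planar component. After finitely many reductions we obtain a lift $\tilde S'$ of $v_k$ and a compressing disk $D'$ isotopic to $D$ but disjoint from $H$; the compression of $\tilde S'$ along $D'$ is an honest edge of $\mS(M,H)$ lifting $v_k \to v_{k+1}$, with endpoint a lift $\tilde v_{k+1}$ of $v_{k+1}$.

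For the oriented refinement, the lifted compression edge inherits its disk-side orientation from the original edge of $\mS(M)$, since both are defined by the same disk (which lies on the same side of $\tilde S'$ as of $S$). Each inserted h-disk arises from a subdisk of $D$ (possibly attached to a disk in $\partial H$), so it lies on the side of $\tilde S$ that contains the interior of $D$, which is also the side of the final compressing disk $D'$. Thus every inserted edge carries the same disk-side orientation as the main compression edge, and the entire lift is an oriented path whenever the original edge is oriented.

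The main obstacle I expect is the sidedness argument in the previous paragraph: when $\delta \subset H$, the h-disk we extract is an essential disk for $H$ disjoint from $\tilde S$, and one has to verify that the induced h-compression of $\tilde S$ lies on the side of $\tilde S$ matching the compression direction. I would handle this by resolving the loops of $D\cap\partial H$ in order of proximity to $\partial D$ inside $D$, so that at each stage the relevant h-disk is adjacent to a region of $D$ already known to lie on the correct side of $\tilde S$; the orientation claim then follows by a straightforward induction on $|D\cap\partial H|$, and the rest of the argument is routine general position.
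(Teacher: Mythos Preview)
The paper leaves this lemma as an exercise, preceded only by the remark that any two lifts in $\mS(M,H)$ of the same vertex of $\mS(M)$ are related by a sequence of bridge compressions and their inverses. So there is no paper proof to compare against; I evaluate your argument on its own.

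Your inductive framework and the base case are fine, but the core of the inductive step has a real gap. Suppose an innermost loop $\ell$ of $D \cap \partial H$ is essential in $\partial H$, bounding the sub-disk $\delta \subset D$. Then $\partial \delta \subset \partial H$ and the interior of $\delta$ is disjoint from $\tilde S$ (since the interior of $D$ is). Such a $\delta$ is \emph{not} an h-disk for $\tilde S$: compressing and cut disks must have boundary in $\tilde S \setminus H$, and a bridge disk must have a boundary arc in $\tilde S$. Your phrase ``possibly combined with an adjacent local piece of $D$'' does not repair this. The annulus of $D$ just outside $\ell$ has its other boundary either on $\partial D$ or on another loop of $D \cap \partial H$; only in the first case (when $\ell$ is the \emph{unique} intersection loop) does $D$ itself become an honest cut disk for $\tilde S$, and that move is the lift of the edge $v_k \to v_{k+1}$, not an intermediate step projecting to a point. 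With several nested essential loops you have produced no h-disk at all, and the argument stalls. A second, related problem: even granting some h-compression here, your assertion that it ``sends to a point of $\mS(M)$'' is unsupported. The projection collapses only bridge-compression edges and those (cut-)compression edges that split off a planar piece; a generic compression or cut compression changes the isotopy class in $M$, and you give no argument that your move is of the collapsing type. Your orientation paragraph inherits the same defect, since it rests on h-disks that have not actually been exhibited.

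The paper's hint points to the missing tool. One workable route is to \emph{not} push $\partial D$ off $\tilde S \cap H$, so that $D \cap \partial H$ contains arcs. An outermost such arc cuts off a sub-disk $\delta' \subset D$ with $\partial \delta'$ consisting of an arc in $\partial H$ and an arc in $\tilde S$; when $\delta'$ lies in $M \setminus H$ and its $\tilde S$-arc joins two distinct components of $\tilde S \cap \partial H$, this is a genuine bridge disk for $\tilde S$ on the same side as $D$. Bridge compressing along it is an isotopy in $M$ (hence projects to a point), respects the orientation, and lowers $|D \cap \partial H|$. Iterating, with additional care for the degenerate sub-cases (outermost piece inside $H$, both endpoints on the same loop, and the residual closed loops once all arcs are gone), eventually turns $D$ into a compressing or cut disk in $\mS(M,H)$ that lifts the original edge.
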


In other words, we can lift any path in $\mS(M)$ to a path in $\mS(M, H)$. The lifted path $E$ in $\mS(M, H)$ will not necessarily have a single maximum.  However, the translation axiom allows us to slide (i.e. amalgamate) $E$ to a Heegaard path in $\mS(M, H)$.  Moreover, Lemma~17.3 in~\cite{axiomatic} states that (for any height complex in which the translation axiom holds) any equivalence class of paths amalgamates to a unique equivalence class of Heegaard paths.

Because the map from $\mS(M, H)$ to $\mS(M)$ preserves the relative complexities along edges, the image of every Heegaard path in $\mS(M, H)$ is a Heegaard path in $\mS(M)$ and thus determines a Heegaard splitting.  Moreover, the reader can check that equivalent paths in $\mS(M, H)$ will have equivalent images in $\mS(M)$, so we have the following:

\begin{Lem}
\label{uniquesplittinglem}
Every Heegaard splitting is represented by an oriented path in $\mS(M, H)$, every oriented path in $\mS(M, H)$ determines a unique isotopy class of Heegaard splittings in $M$ and any two equivalent paths in $\mS(M, H)$ determine the same isotopy class of Heegaard splittings.
\end{Lem}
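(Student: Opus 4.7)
The plan is to assemble this lemma from three pieces: a lift-then-amalgamate construction, a uniqueness statement for amalgamation, and compatibility of the height homomorphism with equivalence.

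First I would prove the representation statement. Let $(\Sigma,H^-_\Sigma,H^+_\Sigma)$ be a Heegaard splitting of $M$. The path $v_- \to v_\Sigma \to v_+$ in $\mS(M)$, where $v_\Sigma$ is the vertex represented by $\Sigma$ and the descending edges on either side correspond to the two spines, is an oriented Heegaard path whose two handlebodies recover the given splitting. By Lemma \ref{reprepath} this path lifts to an oriented path in $\mS(M,H)$, which gives the desired representative (it need not yet be a Heegaard path, because the lifted path may pick up extra local maxima coming from bridge moves, but that is immaterial for the representation claim).

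Next I would establish the map from oriented paths to isotopy classes of splittings. Given any oriented path $E$ in $\mS(M,H)$, the translation axiom (verified for $\mS(M,H)$ in Section \ref{thinhandlesect}) lets us amalgamate local minima into adjacent maxima until the resulting path $E'$ has a single maximum, i.e.\ is a Heegaard path in $\mS(M,H)$. By Lemma~17.3 of \cite{axiomatic}, the equivalence class of $E'$ is uniquely determined by $E$. Apply the height homomorphism $\mS(M,H) \to \mS(M)$ to $E'$: because the homomorphism preserves relative complexities, the image is again a Heegaard path, which defines a Heegaard splitting $(T, H^-_T, H^+_T)$ of $M$. The two compression bodies in the splitting are, by Section \ref{boundaryaxiomsect}, exactly (the quotients of) the h-compression bodies built from the two descending halves of $E'$, so the splitting does not depend on which representative Heegaard path we chose in the equivalence class.

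For the equivalence statement, suppose $E_1$ and $E_2$ are equivalent paths in $\mS(M,H)$. Amalgamating each to a Heegaard path by the translation axiom yields, by Lemma~17.3 of \cite{axiomatic}, Heegaard paths $E_1'$ and $E_2'$ in the same equivalence class. Horizontal slides in $\mS(M,H)$ descend under the height homomorphism to horizontal slides in $\mS(M)$ (or collapse to identities when the swapped face crushes to a point or an edge), so the images of $E_1'$ and $E_2'$ are equivalent Heegaard paths in $\mS(M)$; these determine isotopic Heegaard splittings of $M$ by the construction recalled at the start of this section.

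The main obstacle is the middle step: ensuring that amalgamating via the translation axiom produces an equivalence class of Heegaard paths that depends only on $E$ and not on the order in which minima are absorbed. I would handle this by quoting Lemma~17.3 of \cite{axiomatic} directly, since that lemma is proved in the axiomatic framework precisely for height complexes satisfying the translation axiom. The remaining bookkeeping — checking that lifts of oriented paths stay oriented, and that the height homomorphism maps horizontal slides to horizontal slides — is routine once one unpacks the definitions of edge orientation and projection given in Sections \ref{hcomsect} and \ref{thinhandlesect}.
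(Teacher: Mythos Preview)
Your proposal is correct and follows essentially the same approach as the paper: lift via Lemma~\ref{reprepath}, amalgamate using the translation axiom and Lemma~17.3 of~\cite{axiomatic}, then push down along the height homomorphism $\mS(M,H)\to\mS(M)$ to obtain a Heegaard path in $\mS(M)$. In fact the paper does not give a formal proof of this lemma but states it as a summary of the preceding discussion, so your version is slightly more explicit than the original (in particular your observation that horizontal slides descend to horizontal slides or collapse, and your reference back to the h-compression bodies of Section~\ref{boundaryaxiomsect}, fill in details the paper leaves to the reader).
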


\section{Flat surfaces}
\label{flatsect}

Let $\Sigma$ be a compact, connected, closed, orientable surface and let $N = \Sigma \times [0,1]$.  (The discussion below also be adapted to $\Sigma \times S^1$ and to surface bundles, but we will leave that for future work.)  A \textit{vertical annulus} in $N$ is a surface of the form $\ell \times [a,b]$ where $\ell$ is a simple closed curve in $\Sigma$ and $[a,b]$ is a closed interval in $[0,1]$.  A \textit{horizontal subsurface} in $\Sigma \times [0,1]$ is a surface of the form $F \times \{a\}$ where $F \subset \Sigma$ is compact subsurface and $a \in [0,1]$.  We will say that a compact, properly embedded surface $S \subset \Sigma \times [0,1]$ is \textit{flat} if $S$ is the union of a collection of vertical annuli and horizontal subsurfaces such that any two vertical annuli in $S$ are disjoint.  In particular, the vertical annuli have disjoint boundary loops which coincide with the boundary loops of the horizontal surfaces.

\begin{Lem}
Every pl surface properly embedded in $\Sigma \times [0,1]$ is isotopic to a flat surface.
\end{Lem}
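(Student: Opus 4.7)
The plan is a PL Morse-theoretic analysis of the height function $\pi \colon \Sigma \times [0,1] \to [0,1]$ restricted to $S$. After a small PL isotopy, I would arrange that $\pi|_S$ is a PL Morse function with finitely many isolated critical points, each a local maximum, local minimum, or saddle. Then I would choose regular values $0 = t_0 < t_1 < \cdots < t_N = 1$ of $\pi|_S$ such that each slab $\Sigma \times [t_i, t_{i+1}]$ contains at most one critical value of $\pi|_S$ in its interior, and so that each level set $S \cap (\Sigma \times \{t_i\})$ is a disjoint union of simple closed curves in $\Sigma$.

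Next I would handle the two types of slabs separately. On a slab containing no critical value of $\pi|_S$, the restriction of $S$ to the slab is a product cobordism, which is isotopic rel boundary to a disjoint union of vertical annuli over the curves in $S \cap (\Sigma \times \{t_i\})$. On a slab containing exactly one critical value $c_i$, the piece $S_i = S \cap (\Sigma \times [t_i, t_{i+1}])$ is an elementary cobordism realizing a single $0$-, $1$-, or $2$-handle attachment in dimension two. I would replace $S_i$ by a flat model: let $F_i \subset \Sigma$ be a regular neighborhood of the critical locus of $\pi|_S$ at height $c_i$ (a disk at an extremum; a pair of pants about the figure-eight at a saddle); take $F_i \times \{c_i\}$ as the horizontal piece; and attach vertical annuli from each component of $\partial F_i \times \{c_i\}$ either down to the corresponding curve of $S \cap (\Sigma \times \{t_i\})$ or up to the corresponding curve of $S \cap (\Sigma \times \{t_{i+1}\})$. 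The flat model is isotopic rel boundary to $S_i$ because both realize the same elementary cobordism inside $\Sigma \times [t_i, t_{i+1}]$, which follows from uniqueness of regular neighborhoods of the critical locus.

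Assembling the slab-by-slab replacements produces a surface isotopic to $S$ that is a union of horizontal subsurfaces and vertical annuli. The main obstacle will be the disjointness clause in the definition of flat: vertical annuli in adjacent slabs meet along common boundary circles at each separating level $t_i$, so I would fuse each such pair into a single taller vertical annulus $\ell \times [t_{i-1}, t_{i+1}]$ and iterate until each maximal product piece of $S$ between consecutive horizontal subsurfaces is represented by a single vertical annulus. Within each critical slab, disjointness is automatic because the boundary circles of $F_i$ are disjoint in $\Sigma$, and the level curves of $S$ unaffected by the critical point are disjoint in $\Sigma$ from $F_i$ by the level-set structure of $\pi|_S$. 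I expect the main technical work to be the initial PL Morse genericity perturbation and the verification that the handle-attachment piece $S_i$ is isotopic rel boundary to its flat model; both reduce to standard facts from the PL topology of surfaces.
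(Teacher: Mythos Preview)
Your proposal is correct and follows essentially the same approach as the paper: analyze the height function $\pi|_S$, make regular neighborhoods of the singular levels horizontal, and straighten the remaining annular pieces into vertical annuli. The paper's version is terser and avoids your fusing step by treating each annular component of the complement of the singular neighborhoods as a single vertical annulus from the outset, rather than cutting into slabs first, but the underlying idea is identical.
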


\begin{proof}
Let $\pi$ be the projection map from $\Sigma \times [0,1]$ to $[0,1]$.  If $S$ is a piecewise-linear surface then (after isotoping $S$ slightly if necessary) the level sets of $f|_S$ will consist of simple closed curves and finitely many graphs in $S$.  We can isotope $S$ so as to make a regular neighborhood of each graph horizontal.  The complement will be foliated by simple closed curves, and thus consists of annuli, which can be isotoped to vertical annuli.  The result of this isotopy is a flat surface.
\end{proof}

If $S$ is strongly separating and transversely oriented in $N$ then every horizontal subsurface $F \times \{a\}$ has a positive component of $N \setminus S$ on one side and a negative component on the other.  We will say that this subsurfaces \textit{faces up} if $F \times \{a + \epsilon\}$ is in the positive component of $N \setminus S$ for small $\epsilon$ and \textit{faces down} if it is contained in the negative component. Every subsurface will face up or down and we can assume that no level set $\Sigma_t$ contains both up-facing and down-facing subsurfaces of $S$.

We will say that two horizontal subsurfaces of $S$ are \textit{adjacent} if there is a vertical annulus with one boundary loop in each of the horizontal subsurfaces.  We will say that a vertical annulus $A$ adjacent to a horizontal subsurface $F$ \textit{goes up} if $A$ is locally above $F$ (with respect to $f$).  Otherwise, we will say that $A$ \textit{goes down} from $F$.

\begin{Def}
We will say that a flat surface $S$ is \textit{tight} if no horizontal annulus has one adjacent vertical annulus going up and the other going down and if $F$ is a horizontal disk or a horizontal annulus with both annuli going the same direction then the closest adjacent subsurface faces the opposite way from $F$.
\end{Def}

If $F$ is a horizontal annulus with one adjacent vertical annulus above it and the other below, then we can shrink $F$ to a loop and isotope the adjacent annuli so that the two vertical annuli form a single vertical annulus.

If $F$ is a horizontal disk or an annulus with both vertical annuli facing the same way such that the closest adjacent horizontal subsurface $F'$ faces the same way as $F$, then the projections of $F$ and $F'$ will be disjoint. We can push $F$ into the same level as $F'$, shrinking the annulus between them down to a loop, and pushing any other portions of $S$ out of the way. The resulting surface is still flat, but has one fewer horizontal subsurfaces. Thus we have the following:

\begin{Lem}
Every flat surface $S$ is isotopic to a tight surface $S'$ such that every horizontal subsurface of $S$ is either a disk, an annulus or is sent into a horizontal subsurface of $S'$.
\end{Lem}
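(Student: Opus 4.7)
The plan is to establish the lemma by iterating the two local modifications described in the two paragraphs that precede the lemma statement, using a simple complexity to ensure termination. Define $c(S)$ to be the number of horizontal subsurfaces in the flat decomposition of $S$; this is a non-negative integer, and the lemma will follow once I show each move strictly decreases $c(S)$, preserves flatness and ambient isotopy class, and only ever removes disks or annuli.

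First I would verify that each tightness violation admits a move that reduces $c(S)$ by exactly one. For a violation of the first type, $F$ is a horizontal annulus with one adjacent vertical annulus going up and the other going down; collapsing $F$ to a loop amalgamates the two vertical annuli into a single vertical annulus and removes one horizontal piece, so $c$ drops by one. For a violation of the second type, $F$ is a horizontal disk or an annulus with both vertical annuli going the same direction and the closest adjacent horizontal subsurface $F'$ faces the same way as $F$; because $F$ is a disk or annulus and $F'$ is the closest adjacent piece facing the same direction, the projection of $F$ to $\Sigma$ is disjoint from the projection of $F'$, and no horizontal subsurface of $S$ has projection meeting that of $F$ between the levels of $F$ and $F'$. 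This lets me push $F$ vertically into the level of $F'$, first sliding any intervening vertical annuli or horizontal subsurfaces whose projections meet that of $F$ out of the way, collapsing the vertical annulus joining $F$ to $F'$ to a loop in $F \cup F'$. The result is again a flat surface with one fewer horizontal subsurface.

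Next I would note that in both moves the modification is supported in a collar neighborhood of $F \cup F'$ (or of $F$ together with the two incident vertical annuli) and is realized by an ambient isotopy of $\Sigma \times [0,1]$, so the isotopy class of $S$ is preserved and the output remains flat. Iterating, the strictly decreasing non-negative integer $c(S)$ forces the process to terminate, and the terminal surface $S'$ is tight by construction.

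For the final clause, both moves act only on a horizontal disk or horizontal annulus: move 1 removes an annulus, and move 2 absorbs a disk or annulus $F$ into an adjacent horizontal subsurface $F'$ (which could be of any topological type). In particular, a horizontal subsurface of $S$ that is not a disk or annulus is never destroyed, merely possibly enlarged by absorbing disks and annuli from its neighbors; the image of such a subsurface in $S'$ is again a horizontal subsurface. Thus every horizontal subsurface of $S$ is either a disk, an annulus, or is sent into a horizontal subsurface of $S'$, as claimed. The main subtlety is move 2 in the presence of other horizontal pieces between the levels of $F$ and $F'$: one must argue that because $F$ is a disk or annulus and $F'$ is the \emph{closest} same-facing neighbor, no essential obstruction to the vertical push exists, and this is where I would spend the most care in a careful write-up.
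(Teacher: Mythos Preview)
Your proposal is correct and follows essentially the same approach as the paper: the paper's ``proof'' is precisely the two paragraphs preceding the lemma, describing the same two local moves you identify, each of which removes one horizontal subsurface, and then simply states ``Thus we have the following.'' Your write-up is a fleshed-out version of that sketch, making explicit the complexity $c(S)$ and the preservation of non-disk, non-annulus subsurfaces, both of which the paper leaves to the reader.
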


We will often want to rule out horizontal disk and annuli subsurfaces in tight surfaces.  This is not possible in general, but the following Lemma will allow us to rule them out in many cases.

\begin{Lem}
\label{tightdisknotesslem}
If $S$ is a tight surface in which some horizontal subsurface is either a disk or an annulus then then there is a horizontal loop that is trivial in $\Sigma$ and essential in $S$.
\end{Lem}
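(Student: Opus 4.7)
My plan is to exploit the tightness conditions on $S$ to analyze the transverse orientations near the disk or annulus subsurface $F$, and extract the desired loop from the resulting flat-structure constraints.

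Suppose first that $F$ is a horizontal disk at level $a$ with boundary $\ell = \partial F$ and attached vertical annulus $A = \ell \times [a,b]$, which we may assume goes upward. Let $F'$ be the adjacent horizontal subsurface at the other end of $A$, meeting it along $\ell' = \ell \times \{b\}$; by tightness condition (2), $F'$ faces opposite to $F$. Since $\ell$ bounds a disk in $\Sigma$, call it $D_\ell$, I may isotope $F$ in $\Sigma_a$ so that $F = D_\ell \times \{a\}$. I will study the $3$-ball $R = D_\ell \times [a,b]$, whose boundary splits as $F \cup A \cup T$ where $T = D_\ell \times \{b\}$.

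The key step is to track the transverse-orientation labels at level $b + \epsilon$ for small $\epsilon > 0$. Because $F$ faces up, the region just above $F$ inside $R$ lies on the positive side of $S$, and this label persists upward through $R$ unless $S$ enters the interior of $R$. Meanwhile, because $F'$ faces down, the region just above $F'$ (outside $D_\ell$ at level $b+\epsilon$) lies on the negative side. These two opposite-signed regions must be separated by $S$ along the loop $\ell \times \{b+\epsilon\}$; but $A$ terminates at level $b$, and the disjointness of vertical annuli in a flat surface prevents another vertical annulus at $\ell$ from beginning just above $b$. This forces $S$ to contain a horizontal subsurface $F''$ meeting the interior of $D_\ell$ at some level in $[a,b]$, with a boundary loop $\gamma$ lying inside $D_\ell$. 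By construction $\gamma$ is trivial in $\Sigma$, and I expect it to be essential in $S$: any bounding disk in $S$ could be used to perform a simplifying move that contradicts the tightness of $S$.

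The annulus case proceeds analogously, invoking tightness (1) to ensure both vertical annuli attached to $F$ go the same direction and then tightness (2) to place the closest adjacent subsurface on the opposite transverse side of $F$. Here the $3$-ball $R$ is replaced by the solid annular region bounded by $F$, the two attached vertical annuli, and the closest adjacent subsurface, and the same side-matching argument produces the desired loop on a horizontal subsurface of $S$ meeting the interior of this region. The main obstacle I anticipate is the degenerate configuration in which the local analysis closes off to a sphere component (for instance, $F' = T$ so that $F \cup A \cup F'$ is already a sphere and no such $F''$ is forced); I expect to handle this by iterating the argument on the new disk or annulus subsurfaces that appear after sphere-blind removal, or by noting that in the application the relevant component of $S$ is not a sphere, so after finitely many iterations one must reach a non-degenerate configuration to which the local argument applies.
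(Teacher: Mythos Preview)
Your transverse-orientation bookkeeping places $F'$ on the wrong side of $\ell'$. You write that the region just above $F'$ lies ``outside $D_\ell$ at level $b+\epsilon$,'' i.e., that $F'$ sits outside the disk $D_\ell$. But since $F$ faces up, the region inside the cylinder $\ell \times [a,b]$ is on the positive side of $A$, so just \emph{outside} $\ell$ at level $b-\epsilon$ is negative. If $F'$ lay outside $\ell'$, then (since $F'$ faces down) the region just below $F'$ there would have to be positive --- a contradiction. Hence $F'$ lies \emph{inside} $\ell'$; its projection is contained in $D_\ell$. With $F'$ on the inside, there is no sign conflict at level $b+\epsilon$: the positive region inside $R$ simply terminates at $F' \subset T$, and no further subsurface $F''$ in the interior of $R$ is forced. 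This is precisely what the paper extracts from tightness: the adjacent subsurface $F'$ projects into $F$, so every boundary loop of $F'$ is trivial in $\Sigma$.

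That observation alone does not finish the argument, and your claim that the resulting loop must be essential in $S$ (because a bounding disk would yield ``a simplifying move that contradicts tightness'') is unsupported: tightness constrains only the facing directions near disk and annulus horizontal subsurfaces and does not by itself prevent a horizontal loop from bounding a disk in $S$. The paper closes this gap by contradiction. Assuming every horizontal loop trivial in $\Sigma$ is also trivial in $S$, one obtains horizontal loops that are trivial in both $\Sigma$ and $S$ yet essential in an adjacent horizontal subsurface; an innermost such loop bounds a disk in which the $F \mapsto F'$ step can be repeated to produce a strictly smaller loop of the same type, contradicting innermostness. Your sphere-component worry is really a symptom of the same missing ingredient --- when $F'$ is itself a disk, no nontrivial loop appears locally --- and it is resolved by this innermost iteration, not by a single local sign analysis.
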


\begin{proof}
Assume for contradition $S$ contains a horizontal disk or annulus subsurface $F$, but every horizontal loop is either trivial in both $S$ and $\Sigma$ or is non-trivial in both.  If $F$ is a disk then there is a single vertical annulus $A$ adjacent to $F$, and we will let $F'$ be the other subsurface adjacent to $A$.  If $F$ and $F'$ faced the same way, they would have disjoint projections into $\Sigma$.  Because $S$ is tight, $F'$ must face the opposite way from $F$.  

This implies that the projection of $F'$ onto the level surface $\Sigma_t$ containing $F$ must be contained in $F$, as in Figure~\ref{disksubsurfacefig}.  Thus $S$ contains a horizontal loop that is trivial in both $S$ and $\Sigma$, but non-trivial in the horizontal subsurface $F$.  A similar argument shows that if $F$ is an annulus then there is a horizontal loop that is trivial in both surfaces, but not in a horizontal subsurface.
\begin{figure}[htb]
  \begin{center}
  \includegraphics[width=2.5in]{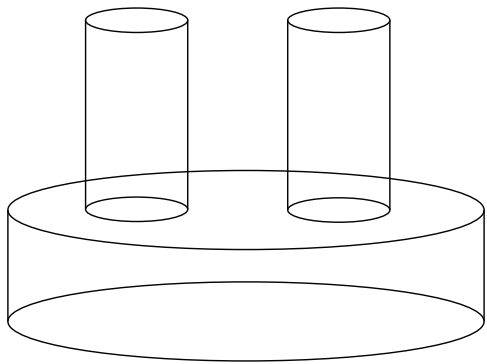}
  \caption{In a tight surface, if a horizontal subsurface is a disk then the adjacent subsurface has boundary loops that are trivial in $\Sigma$.}
  \label{disksubsurfacefig}
  \end{center}
\end{figure}

Consider the set of all horizontal loops that are trivial in both surfaces but not in an adjacent horizontal subsurface.  Each such loop bounds a disk containing at least one horizontal subsurface of $S$.  Let $D$ be an innermost such disk, with $F$ a horizontal disk subsurface in $D$ and $F'$ the adjacent horizontal subsurface as above.  Because $\partial D$ is non-trivial in a horizontal subsurface, $D$ must contain $F'$.  However, as noted above, $F'$ contains a loop that is trivial in both surfaces, contradicting the assumption that $D$ is innermost.  The contradiction completes the proof.  \end{proof}

Let $F_1, F_2 \subset S$ be horizontal subsurfaces facing opposite ways in consecutive levels $a, b$ of $[0,1]$ such that $a < b$.  Let $\alpha \subset \Sigma$ be an arc such that $\alpha \times \{a\}$ is properly embedded in $F_1$ with endpoints that end in vertical annuli that are both above $F_1$.  If $(\alpha \times \{b\}) \cap F_2$ is empty or consists of a regular neighborhood in $\alpha$ of one or both of its endpoints then the product $\alpha \times [a,b]$ is a disk, shown in Figure~\ref{bandmovefig}, whose boundary consists of an arc in $S$ and a horizontal arc $\alpha' \times \{b\}$ where $\alpha'$ is the closure of $\alpha \setminus F_2$.
\begin{figure}[htb]
  \begin{center}
  \includegraphics[width=5in]{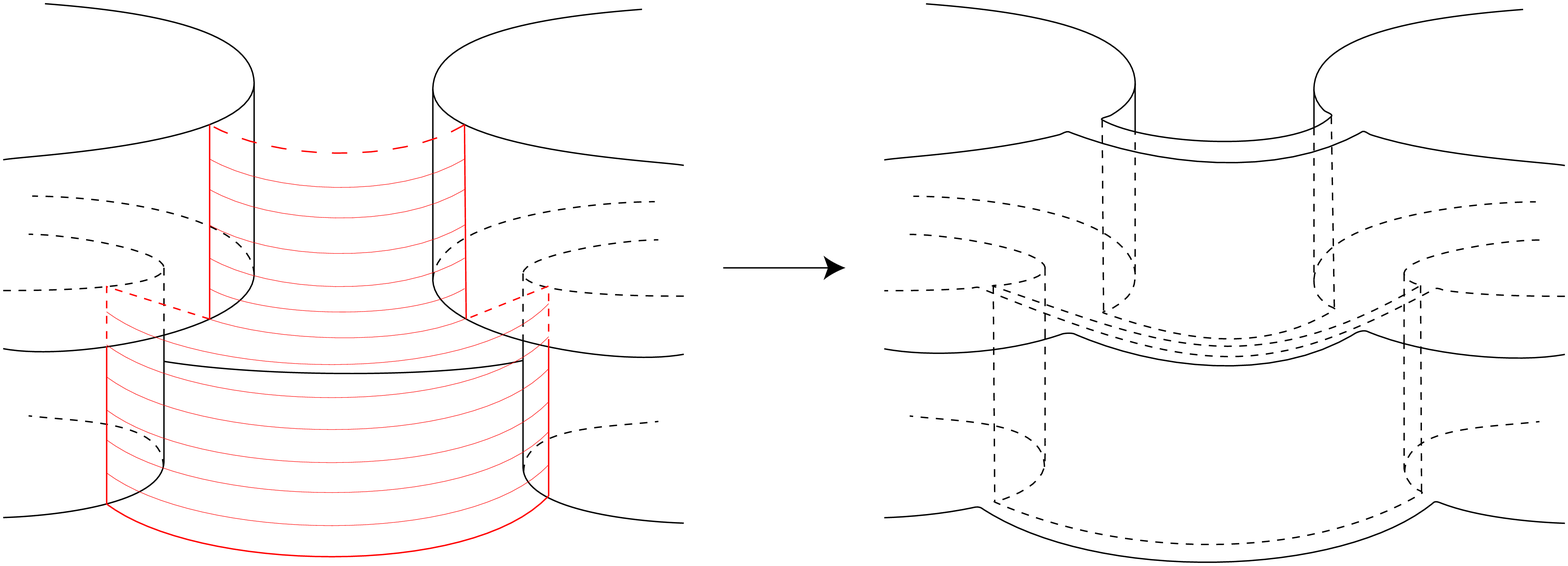}
  \put(-250,0){$\alpha$}
  \caption{The red disk defines a band move from the surface on the left to the surface on the right.}
  \label{bandmovefig}
  \end{center}
\end{figure}

The endpoints of $\alpha'$ are contained either in one or two vertical annuli above $F_2 \times \{b\}$, or in the interior of annuli above $F_1$ that extend past $F_2$.  If $c$ is the next level above $b$ containing a horizontal subsurface of $S$ then $\alpha' \times [b,c]$ is a disk whose boundary intersects $S$ in two vertical arcs.  The union $D = (\alpha \times [a,b]) \cup (\alpha' \times [b,c])$ is a disk whose boundary consists of an arc in $S$ and the horizontal arc $\alpha' \times \{c\}$.  Let $S'$ be the result of isotoping $S$ across the disk $D$, to form a new flat surface, then pulling this flat surface tight.  The surface $S'$ resulting from the arc $\alpha$ on the left of Figure~\ref{bandmovefig} is shown on the right of the Figure. 

\begin{Def}
We will say that $S'$ is the result of a \textit{band move} on $S$.
\end{Def}

The terminology comes from the idea that we should think of the move as transferring a band from one horizontal subsurface to the other. There is also, of course, a symmetric move in which we push a band down rather than up.

In the case above, the band from $F_1$ has to move through $F_2$ (rather than stopping at that level) because the band faces the opposite way from $F_2$. If $F_1$ and $F_2$ face the same way in consecutive levels $a, b$ of $f$, there is a simpler type of band move that moves a band from $F_1$ into $F_2$ as well as a move that takes a band from $F_2$ into $F_1$.

We will also define a type of band move under slightly weaker circumstances.  Assume there is an essential subsurface $F' \subset F_2$ such that the intersection $(\alpha \times \{b\}) \cap F'$ is either empty or consists of interval neighborhoods of one or both endpoints of $\alpha$.  Then we can form a new flat surface as follows:  Isotope the subsurface $F'$ of $F_2$ down to the level $b'$ half way between $a$ and $b$. The arc $\alpha$ now intersects the horizontal subsurface $F'$ above $F_1$ in a way that allows us to perform a band move, creating a non-empty up-facing level between $b'$ and $b$.  We will again say that this final surface $S'$ is a \textit{band move} of $S$.

In the rest of the paper, $N = \Sigma \times [0,1]$ will be the complement in a 3-manifold $M$ of a set $H$ consisting of two handlebodies.  A transverse surface $R$ in $M$ intersects $N$ in a properly embedded surface $S$, and we can isotope $R$ so that $S = R \cap N$ is flat.  In this picture, a compressing disk for $R$ (with respect to $H$) is a compressing disk in the usual sense for $S$.  A \textit{bridge disk} for $R$ appears as a boundary compressing disk for $S$ that intersects two different components of $\partial S$.  A cut disk appears as an annulus that has one component in $S$ and the other component in $\partial N$, i.e. in $\Sigma \times \{0\}$ or $\Sigma \times \{1\}$.  Such an annulus will define a cut disk for $R$ if and only if its boundary loop in $\partial N$ bounds a disk in one of the handlebody components of $H$.

\section{Essential surfaces}
\label{esssurfsect}

A flat surface $S$ will be called \textit{essential} if for every horizontal subsurface $F \subset \Sigma_t$ of $S$, the boundary of $F$ is essential in $\Sigma_t$.

\begin{Lem}
\label{esstightlem}
Assume $S$ is the intersection of $N$ with an index-zero (with respect to $H$) surface in $M$.  If $S$ is tight then $S$ is essential.  
\end{Lem}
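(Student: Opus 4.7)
The plan is to argue by contrapositive: if $S$ is tight but fails to be essential, then I can produce a compressing disk for $R$ disjoint from $H$, contradicting the hypothesis that $R$ is index-zero. Suppose then that some horizontal subsurface $F$ of $S$ at level $t$ has a boundary loop $\ell$ trivial in $\Sigma_t$, and write $\mathcal{T}$ for the collection of all horizontal loops of $S$ that are trivial in their level.

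First, I would rule out the case in which every loop of $\mathcal{T}$ is trivial in $S$. In that case $\ell$ bounds a disk $E \subset S$. Since $\partial E = \ell$ is itself a horizontal loop, each piece of the flat decomposition of $S$ meets $E$ either trivially or entirely, so $E$ is a union of complete horizontal subsurfaces $F_1,\dots,F_k$ and complete vertical annuli of $S$. This gives $\chi(E) = \sum_i \chi(F_i) = 1$, and since $\chi(F_i) \le 1$ with equality only for disks, some $F_i$ must be a horizontal disk subsurface. But then Lemma~\ref{tightdisknotesslem} applied to the tight surface $S$ produces a horizontal loop that is trivial in $\Sigma$ and essential in $S$, contradicting the present case.

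Next, I would handle the remaining case, in which some $\ell^* \in \mathcal{T}$ is essential in $S$. Among such loops I would pass to one whose bounding disk $D^* \subset \Sigma_{t^*}$ is innermost, i.e.\ contains no horizontal loop of $S$ at level $t^*$ that is essential in $S$; this is a finite iterative process since $D^*$ shrinks at each step and horizontal loops at a fixed level are finite in number. Every loop of $D^* \cap S$ in the interior of $D^*$ then lies in $\mathcal{T}$ and, by the innermost choice, is trivial in $S$, so bounds a disk $E^{**}$ in $S$. The corresponding sub-disk of $D^*$ together with $E^{**}$ forms a $2$-sphere; by irreducibility of $N = \Sigma \times [0,1]$ it bounds a ball, and standard cut-and-paste across the ball removes the intersection loop while fixing $\partial D^*$. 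Iterating yields a disk $\tilde D^* \subset N$ with $\partial \tilde D^* = \ell^*$ essential in $S$ and interior disjoint from $S$, hence a compressing disk for $R$ with respect to $H$, contradicting index-zero.

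The main obstacle is the Euler-characteristic bookkeeping in the first case: I must verify that $E$ inherits a decomposition by complete pieces of the flat decomposition, which hinges on the observation that $\partial E$ is a horizontal loop and therefore cannot split the interior of any single horizontal subsurface or vertical annulus. A secondary concern is that loops of $\mathcal{T}$ may appear as cross-sections of vertical annuli rather than as boundaries of horizontal subsurfaces; these are isotopic in $S$ to adjacent horizontal boundary loops, so they may be replaced by such without changing essentiality in $S$. The degenerate case $\Sigma = S^2$, where $N$ fails to be irreducible, would have to be treated separately, but the flanking handlebodies make it straightforward.
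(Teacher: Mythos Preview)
Your argument is correct and follows the same route as the paper: produce a horizontal loop that is trivial in $\Sigma$ yet essential in $S$, and from an innermost such loop extract a compressing disk, contradicting index zero. The paper's proof simply invokes Lemma~\ref{tightdisknotesslem} at this point, but that lemma requires a horizontal disk or annulus subsurface, which is not immediate from ``not essential''; your Case~1 Euler-characteristic argument (the disk $E\subset S$ decomposes into complete flat pieces, forcing a horizontal disk piece) is exactly the step that justifies this invocation. So your proof is essentially the paper's, with that gap filled in.
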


\begin{proof}
If $S$ is not essential then by definition there is a horizontal subsurface $F \subset \Sigma_t$ whose boundary contains a trivial loop.  Because $S$ is tight, Lemma~\ref{tightdisknotesslem} implies that there a horizontal subsurface $F'$ whose boundary contains a loop that is trivial in $\Sigma$ but essential in $S$.  An innermost (in $\Sigma$) such loop bounds a disk in $N$, which defines a compressing disk for $S$, contradicting the assumption that $S$ is the intersection of $N$ with an index-zero surface.
\end{proof}

We will show in later sections that index-one surfaces are isotopic to surfaces that are either essential or have the follow form:

We will say that a subset $S \cap F_t$ of a surface $S \subset N$ has a \textit{flipped square} if it consists of the union of a subsurface $F$ of $S$ and a disk $D$ in $S$ such that $F \cap D$ consists of four points.  Moreover, the intersection of the vertical annuli just above $F_t$ with those just below $F_t$ is these same four points.  In such a subsurface, the normal vector to the disk points in the direction opposite the rest of the subsurface, as in Figure~\ref{flippedfig}.

\begin{figure}[htb]
  \begin{center}
  \includegraphics[width=3.5in]{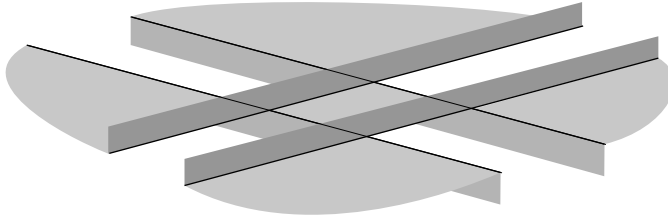}
  \caption{A flipped square is a horizontal disk that intersects the rest of the horizontal subsurface in four points and faces the opposite way.}
  \label{flippedfig}
  \end{center}
\end{figure}

\begin{Def}
A surface $S \subset N$ is an index-one essential surface if it has one horizontal subsurface with a flipped square and the rest of the surface is essential.
\end{Def}

Higher index essential surfaces can be defined by allowing more than one flipped square in the level surfaces.  However, we will only need to consider index-one surfaces for the present work.

Note that the boundary loops of the vertical annuli in an index-one essential surface are essential in their respective level surfaces.  However, the level subsets $S \cap F_t$ may contain loops that are trivial in $F_t$ but essential in $S \cap F_t$.  (If all these loops are essential then we can isotope $S$ to an essential surface without a flipped square.)

We will see below that index-one surfaces with respect to a pair of handlebodies can be isotoped to be index-one essential surfaces.  They may also be isotopic to essential surfaces without flipped squares, but not always.  This is analogous to the fact that to normalize a Heegaard surface in a triangulation, one needs to allow almost normal pieces. (See \cite{rub:alnormal},~\cite{stocking}).

\section{Disks and band moves}
\label{bandmovesect}

Consider an h-disk $D$ for a tight surface $S \subset N$.  The boundary of $D$ consists of horizontal arcs in the horizontal subsurfaces of $S$ and potentially more complicated arcs in the vertical annuli.  However, we can always isotope every essential arc in the vertical annuli to be transverse to the level surfaces $\Sigma_t$ and we can isotope any trivial arc out of the vertical annuli.  Similarly, we can isotope any trivial arc in a horizontal subsurface out of that subsurface.  Thus we will assume that $\partial D$ consists of a union of horizontal arcs that are essential in horizontal subsurfaces of $S$ and vertical arcs in the vertical annuli.  If $D$ is a bridge disk then one horizontal arc in its boundary will be contained in $\partial N$.

We can further isotope $D$ so that for the projection map $\pi : \Sigma \times [0,1] \rightarrow [0,1]$, the restriction of $\pi$ to the interior of $D$ is a Morse function whose level sets consist of loops, properly embedded arcs and saddles.  (A saddle is a graph with one valence four vertex in the interior of $D$ and zero, two or four valence one vertices in the boundary.)

A \textit{tetrapod} is a saddle with four arcs ending in $\partial D$.  Such a level $\tau$ cuts $D \cap N$ into four disks if $D$ is a compressing disk or bridge disk, or three disks and an annulus if $D$ is a cut disk.  (See~\cite{Schl} for a more detailed description and analysis of level sets of Morse functions on disks.)  If three of the disks are disjoint from $\partial N$ and do not contain any saddles then we will say that $\tau$ is an \textit{outermost tetrapod}.  A standard outermost disk argument, as in~\cite{Schl}, shows that if there is a tetrapod in $D$ then there is an outermost tetrapod.

\begin{figure}[htb]
  \begin{center}
  \includegraphics[width=2in]{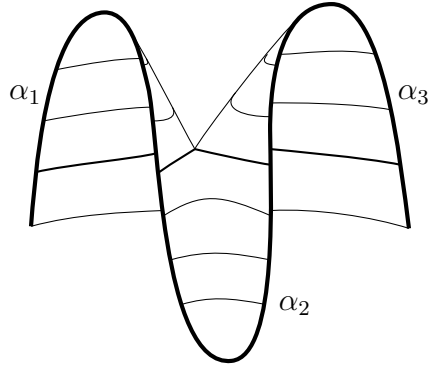}
  \put(-152,100){$\alpha_1$}
  \put(-50,20){$\alpha_2$}
  \put(-5,100){$\alpha_3$}
  \caption{An outermost tetrapod, shown as the second-thickest collection of lines, determines three disks, two above the tetrapod and one below, or vice versa.  The thickest line is the boundary of the disk.  For simplicity, the boundary of $D$ is shown as smooth rather than piecewise linear.}
  \label{tetrapodfig}
  \end{center}
\end{figure}

\begin{Lem}
\label{tetrapodisotopylem}
If a disk $D$ for a tight surface $S$ contains an outermost tetrapod $\tau$ then there is a sequence of band moves of $S$ after which we can isotope $D$ in $N$ to eliminate $\tau$.
\end{Lem}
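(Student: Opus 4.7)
The plan is to use an outermost simple disk of the tetrapod $\tau$ as a template for a band move on $S$, after which $D$ can be isotoped in $N$ to push past the saddle level of $\tau$, eliminating it.

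Let $t_0$ be the level of the saddle. The tetrapod partitions $D \cap N$ into four regions which alternate around the saddle, so exactly two of these lie above level $t_0$ and two lie below. Among the three outermost simple regions, at least one then lies on each side of $t_0$. First I would pick one such simple disk $D'$, say lying above $t_0$. Its boundary is the union of a ``V'' consisting of two arcs of $\tau$ meeting at the saddle (both at level $t_0$) together with an arc $\beta \subset \partial D \subset S$ whose endpoints lie at level $t_0$ and whose interior lies at levels strictly greater than $t_0$.

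Next I would standardize $D'$. Because $D'$ contains no saddles of $\pi|_D$ in its interior, a further isotopy of $D$ rel $\partial D$ cancels or pushes out the remaining interior critical points so that $\pi|_{D'}$ becomes a product with a single maximum along a horizontal arc at some level $t_1 > t_0$. Since $\beta \subset S$ is a union of horizontal arcs in horizontal subsurfaces and vertical arcs in vertical annuli, $\beta$ then takes the form of a staple: a vertical arc up a vertical annulus from level $t_0$ to a horizontal subsurface $F_2 \subset \Sigma_{t_1}$, a horizontal arc $\alpha \subset F_2$, and a vertical arc back down to level $t_0$. Setting $F_1$ to be the horizontal subsurface of $S$ at level $t_0$ meeting the vertical annuli at the endpoints of $\beta$, the pair $(F_1, F_2)$ and the arc $\alpha$ satisfy the hypotheses of the band move from Section~\ref{bandmovesect}: $F_1$ and $F_2$ face opposite ways (forced by the placement of $D'$ between them and tightness of $S$) and $\alpha$ is properly embedded in $F_2$ with endpoints on the vertical annuli used by $\beta$. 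When no horizontal subsurface of $S$ sits exactly at level $t_0$, I would instead use the weaker version of the band move involving an essential subsurface $F' \subset F_2$ isotoped down to the level halfway between $t_0$ and $t_1$.

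Performing this band move replaces the portion of $S$ traced by $\beta$ with a nearly-horizontal band at level $t_0$ along the V of $\tau$. The disk $D'$ then has boundary lying entirely at level $t_0$, so $D'$ is isotopic within $N$ to a nearly-horizontal disk at level $t_0$ and I can isotope $D$ to replace the previous $D'$ accordingly. This changes the topology of the level set of $\pi|_D$ at $t_0$, merging two of the four quadrants of the saddle into a horizontal region so that $\tau$ is no longer a tetrapod. Repeating the argument for an outermost simple region on the opposite side of $t_0$ completes the elimination of $\tau$. The main obstacle will be the case analysis needed to match the configuration of $D'$ and $S$ to the precise hypotheses of the band move, in particular when $\beta$'s endpoints lie in the interior of vertical annuli extending past level $t_0$ rather than on a horizontal subsurface at that level, and when handling bridge disks (in which one arc of $\partial D$ lies on $\partial N$, forcing at least one outermost simple region to lie on each side of $t_0$) and cut disks (in which one of the four regions is an annulus rather than a disk and plays the role of the non-simple region).
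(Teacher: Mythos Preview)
Your outline has the right flavor—use an outermost sub-disk of the tetrapod to guide band moves—but there are two concrete gaps that prevent it from going through as written.

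First, there is no horizontal subsurface $F_1$ of $S$ at the saddle level $t_0$.  By construction the tetrapod sits strictly between two consecutive horizontal levels of $S$, so your target for the band move does not exist; the ``weaker'' band move does not repair this, since it too is defined only for a pair of consecutive horizontal levels.  Related to this, even granting that $\beta$ is a single staple with one horizontal arc $\alpha$ at level $t_1$ (which does follow from the absence of saddles in $D'$), there may be many horizontal subsurfaces of $S$ at levels strictly between $t_0$ and $t_1$.  A band move transports a band past \emph{one} consecutive level, so a single band move cannot carry $\alpha$ from $t_1$ down to anywhere near $t_0$; a sequence of band moves is needed, one for each intervening level.

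Second, even after such a sequence brings $\alpha$ to the horizontal level of $S$ just above $t_0$, the end state ``$\partial D'$ lies entirely at level $t_0$'' is still not achieved, and merely making $D'$ thin does not remove the saddle: the four crossings of $\partial D$ with level $t_0$ persist, so the tetrapod persists.  The paper's argument avoids this by treating all three outermost disks at once.  Sequences of band moves bring $\alpha_1,\alpha_3$ to the first horizontal level above $t_0$ and $\alpha_2$ to the first level below; then one final band move (after splitting the subsurface containing $\alpha_1\cup\alpha_3$) pushes $\alpha_2$ \emph{past} that level so that $\alpha_2$ now lies above $\alpha_1,\alpha_3$.  It is this swap of relative heights—not the flattening of any one sub-disk—that allows the tetrapod to be eliminated by an isotopy of $D$.
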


\begin{proof}
Let $D_1$, $D_2$, $D_3$ be the disks in the complement of $\tau$ that are disjoint from $\partial N$ and from all the tetrapods other than $\tau$.  Assume $D_2$ is adjacent to $D_1$ and $D_3$.  Let $\alpha_i$ be the arc $\partial D_i \cap \partial  D$ for each $i$.

Without loss of generality, assume that the horizontal arc $\alpha_2$ is below $D_2$, as in Figure~\ref{tetrapodfig}.  Then by construction, the horizontal arcs $\alpha_1$, $\alpha_3$ are above $D_1$ and $D_3$.  The tetrapod $\tau$ is contained in a level surface $\Sigma \times \{a\}$ sitting between two horizontal levels of $S$.  Each disk $D_i$ intersects every level between $\alpha_i$ and $\tau$ in an arc parallel to the projection of $\alpha_i$.  If one of these levels contains a horizontal subsurface of $S$ then the projection of $\alpha_i$ will be disjoint from it or intersect it in a neighborhood of one or both of its endpoints.  Thus there is a band move of $S$ that moves each $\alpha_i$ past each level between it and $\tau$.  

In the surface $S'$ that results from these band moves, the arcs $\alpha_1$, $\alpha_3$ sit in one level surface of $S'$ and $\alpha_2$ sits in the level subsurface surface just below them.  The projections of $\alpha_1$, $\alpha_2$, $\alpha_3$ into the level surface $\Sigma \times \{a\}$ containing the tetrapod $\tau$ are parallel to subgraphs of $\tau$, so their projections are isotopic to pairwise disjoint arcs.  The vertical disk defined by $\alpha_2$ will thus intersect a regular neighborhood of $\alpha_1 \cup \alpha_3$ in a regular neighborhood of the the endpoints of (the projection of) $\alpha_2$.  

Split the horizontal surface containing $\alpha_1 \cup \alpha_3$ by pushing a regular neighborhood of $\alpha_1 \cup \alpha_3$ down towards the horizontal subsurface containing $\alpha_2$.  Because the vertical disk defined by $\alpha_2$ intersects this subsurface in a neighborhood of its endpoints, there is a band move taking a band along $\alpha_2$ past the horizontal surface containing $\alpha_1 \cup \alpha_3$.  After this move, $\alpha_2$ is above $\alpha_1$ and $\alpha_3$ so we can modify $D$ to remove the tetrapod.  Note that this entire construction does not create any new tetrapods in $D$.
\end{proof}

\section{Index-one surfaces}
\label{indexonesect}

If a tight surface $S$ contains a horizontal subsurface with a boundary loop that is trivial in $\Sigma$ then an innermost (in $\Sigma$) such loop or arc bounds a compressing disk or bridge disk (respectively) for $S$.  We will call these \textit{horizontal disks}.  Distinct horizontal disks have disjoint boundaries and any two disjoint compressing disks for an index-one surface must be on the same side of the surface.  Thus we have the following:

\begin{Lem}
\label{disksideslem}
If $S$ is a tight index-one surface then any two horizontal disks for $S$ are on the same side of $S$.
\end{Lem}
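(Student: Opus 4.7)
The plan is as follows. By the definition given just before the lemma, each horizontal disk $D_i$ lies in the single level surface $\Sigma \times \{t_i\}$ containing the horizontal subsurface of $S$ whose boundary supplies the innermost-in-$\Sigma$ trivial loop (or arc) $\partial D_i \cap S$. I would split the argument into two small steps, one geometric and one purely definitional.

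First, I would verify that any two horizontal disks $D_1, D_2$ for $S$ are disjoint in $N$. If $t_1 \neq t_2$, the two disks lie in disjoint level surfaces, so there is nothing to check. If $t_1 = t_2$, then $\partial D_1$ and $\partial D_2$ are two distinct, disjoint boundary components of horizontal subsurfaces in the common level $\Sigma \times \{t_1\}$, each being innermost in this level. Suppose for contradiction that the planar region bounded by $\partial D_1$ in $\Sigma \times \{t_1\}$ contains $\partial D_2$ in its interior; then $\partial D_2$ would be a loop of $S \cap (\Sigma \times \{t_1\})$ lying in the innermost disk bounded by $\partial D_1$, contradicting innermostness. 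Since the two loops are disjoint and neither bounding subdisk contains the other, $D_1$ and $D_2$ are disjoint. The bridge-disk (arc) case runs the same way, with trivial arcs in place of trivial loops.

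Second, I would invoke the index-one hypothesis: as recalled in Section~\ref{indexonesect}, a vertex is index-one exactly when the representative surface admits h-disks on both sides and every h-disk on one side meets every h-disk on the other. Since $D_1$ and $D_2$ are h-disks (each being either a compressing or a bridge disk) and have been shown to be disjoint, they cannot lie on opposite sides of $S$, so they lie on the same side.

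The main obstacle, modest as it is, sits in the first step: one must check that two simultaneously innermost loops in the same level surface really do bound disjoint subdisks of $\Sigma$, which is a short nested-disk argument and constitutes essentially the only geometric content of the proof. The second step is then just a direct unpacking of the definition of index one.
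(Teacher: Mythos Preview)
Your proposal is correct and follows essentially the same approach as the paper, which dispatches the lemma in the sentence immediately preceding it: distinct horizontal disks have disjoint boundaries (by innermostness), and for an index-one surface disjoint h-disks must lie on the same side. You have simply unpacked the innermostness argument in detail and noted the trivial case of disks in different levels.
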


We will use this in the proof below.

\begin{Lem}
\label{normalizerlem}
Assume $S$ is the intersection of $N$ with an index-one surface in $M$.  Then $S$ is isotopic to essential or index-one essential surface.
\end{Lem}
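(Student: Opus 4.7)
The plan is to isotope $S$ using tightness and band moves, reducing a suitable complexity at each step, until either $S$ becomes essential or a single flipped square emerges as the only remaining obstruction. The structural analogy to keep in mind is between essential versus index-one essential on one hand, and normal versus almost normal on the other, with the flipped square playing the role of the octagonal (or tubed) piece.

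First I would isotope $S$ to be flat and then tight using the earlier lemmas of Section~\ref{flatsect}. If $S$ is already essential we are done, so assume some horizontal subsurface has a boundary loop trivial in its ambient level $\Sigma_t$. Applying Lemma~\ref{tightdisknotesslem} together with an innermost-loop argument in $\Sigma_t$ yields a horizontal disk, i.e.\ a compressing or bridge h-disk $D^+$ for $S$ whose boundary lies entirely in the horizontal part of $S$. By Lemma~\ref{disksideslem}, every horizontal disk for $S$ lies on a single side; without loss of generality this is the positive side.

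Because $S$ has index one in $M$, there is also an h-disk $D^-$ on the negative side. I would put $D^-$ in general position so that $\pi|_{D^-}$ is Morse, then apply Lemma~\ref{tetrapodisotopylem} repeatedly to perform band moves on $S$ removing all outermost tetrapods; after finitely many moves $D^-$ contains no tetrapods and its singular levels are ordinary saddles (two valence-one boundary vertices). I would now induct on the lexicographic pair consisting of (a) the number of trivial-in-$\Sigma$ horizontal boundary loops of $S$, and (b) the number of saddles of $D^-$. The lowest saddle of $D^-$ sits at some level $s$ between consecutive horizontal levels of $S$; examining how this saddle sits with respect to the nearest horizontal subsurfaces above and below, either
\begin{enumerate}
  \item there is an adjacent horizontal subsurface whose configuration permits a band move on $S$ that pushes $D^-$ past that level and strictly reduces the complexity, so induction applies; or
  \item the saddle is forced to project into a horizontal subsurface $F$ of $S$ in such a way that a small disk cut off by the saddle meets $F$ in exactly four points with opposite transverse orientation, which is precisely the flipped-square configuration of Section~\ref{esssurfsect}.
\end{enumerate}

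The main obstacle is case~(ii): one must verify that the local geometry really is the flipped square as defined, and that at most one such obstruction persists globally. Uniqueness is where the index-one hypothesis is essential — a second flipped square would supply a pair of disjoint h-disks on opposite sides of $S$, contradicting strong irreducibility. Combining the inductive reductions from~(i) with the single exceptional configuration from~(ii) yields a tight surface isotopic to $S$ that is either essential or has exactly one flipped square with the rest essential, completing the proof.
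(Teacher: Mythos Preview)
Your outline has the right flavor (tighten, play off disks on the two sides, band moves, flipped square as the residual obstruction), but the mechanism you propose is not the one that makes the argument go through, and as written it has a genuine gap.

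The paper's proof is not a complexity induction. It builds \emph{two} sequences of tight surfaces, $S_0,\dots,S_k$ and $S_0,S_{-1},\dots,S_{-\ell}$, by removing tetrapods from $D^+$ and from $D^-$ respectively. The heart of the argument is then a careful case analysis of a single band move $S_i\to S_{i+1}$: one tracks a horizontal compressing disk $D_i$ for $S_i$ and shows that either (a) some horizontal disk $D_{i+1}$ for $S_{i+1}$ exists with $\partial D_{i+1}$ disjoint from $\partial D_i$ in $S$, or (b) stopping halfway through that band move already produces an essential surface with exactly one flipped square. If (b) never occurs along the whole chain $S_{-\ell},\dots,S_k$, the disks $D_{-\ell},\dots,D_k$ give a path in the disk complex of $S$ from $D^-$ to $D^+$ (since $D^\pm$ are horizontal or vertical at the ends, hence disjoint from $D_{\pm\text{end}}$). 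That contradicts the index-one hypothesis, which says $D^-$ and $D^+$ lie in different components. So the index-one assumption is used as a \emph{connectivity obstruction in the disk complex}, not as a uniqueness count of flipped squares.

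Your approach instead inducts on the pair (number of trivial horizontal loops, number of saddles of $D^-$). The problem is that neither quantity is controlled by the moves you invoke. The band moves in Lemma~\ref{tetrapodisotopylem} remove tetrapods of $D^-$, but there is no reason they should decrease the number of trivial-in-$\Sigma$ horizontal loops of $S$; they can easily create new ones. Once the tetrapods are gone, you appeal to the ``lowest saddle'' of $D^-$, but you never explain which band move of $S$ this saddle determines, nor why that move decreases your complexity rather than trading one kind of badness for another. Your case~(ii) is also underspecified: a saddle of $D^-$ is a feature of the disk, whereas a flipped square is a feature of the surface $S$; you have not shown how the former forces the latter, and in the paper the flipped square arises not from a saddle of $D^-$ at all but from pausing a specific band move halfway when the only inessential vertical annuli are the ones $A_-$ being consumed. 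Finally, your uniqueness argument (``two flipped squares give disjoint h-disks on opposite sides'') is not correct as stated: two flipped squares give two pairs of disks, but the opposite-side disks from distinct squares need not be disjoint. In the paper uniqueness is automatic because the flipped square is constructed, not found.
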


\begin{proof}
Let $S$ be a tight flat surface representing an index-one vertex $v$ in $\mS(M, H)$.  If $S$ is essential then we have found our essential representative of $S$.  Otherwise, let $D^-$, $D^+$ be a pair of h-disks in different components of the disk complex for $S$.  In particular, we will take $D^-$ on the negative side of $S$ and $D^+$ on the positive side.  By Lemma~\ref{findcompressionlem}, we can assume $D^-$ and $D^+$ are cut disks or compression disks.

If $D^+$ is not vertical or horizontal then it contains an outermost tetrapod and there is a sequence of band moves, given by Lemma~\ref{tetrapodisotopylem} defining a sequence of tight surfaces $S_0,S_{1},\dots,S_{k'}$ after which we can isotope $D^+$ to remove the tetrapod.  By repeating this process for each tetrapod in $D^+$, we can extend this sequence by further band moves until $D^+$ contains no tetrapods, and is thus vertical or horizontal.  If $D^+$ is a compression disk  and the final image of $D^+$ is vertical then there is a final band move that makes it horizontal.  Let $S_0,\dots,S_k$ be the resulting sequence of tight surfaces.  

Define a similar sequence $S_0,S_{-1},\dots,S_{-\ell}$ using the disk $D^-$.  We will show that either one of the surfaces $S_{-\ell},\dots,S_k$ is essential, or there is an essential surface with one flipped square that is intermediate between two of them.

The band move from $S_0$ to $S_1$ consists of three parts, the first of which is optional:  First, we are allowed to separate a horizontal subsurface of $S_0$ into two surfaces $F_0, F_1$, along a collection of essential curves, producing $S'_0$.  Next, we move a band from a second horizontal subsurface $F_2$ past the horizontal surface $F_1$, to get $S'_1$.  Finally, we make $S'_1$ tight to produce $S_1$.  Splitting a horizontal subsurface of $S_0$ does not eliminate any boundary loops of horizontal subsurfaces, so any horizontal disk for $S_0$ is isotopic to a horizontal disk for $S'_0$.  Similarly, pulling $S'_1$ tight does not produce new horizontal loops, so any horizontal disk for $S_1$ is isotopic to a horizontal disk for $S'_1$.

Let $E$ be the disk defining the band move from $S'_0$ to $S'_1$.  The move affects three horizontal levels, which we will label $F_- < F_0 < F_+$.  Without loss of generality, assume that the band is moved from level $F_-$ to $F_+$, so $E$ intersects $F_-$ in an essential horizontal arc, as in Figure~\ref{closeupfig}.  Let $A_+$ be the annulus or pair of annuli between $F_0$ and $F_+$ that intersect $E$.  Let $A_-$ be the annulus or annuli between $F_0$ to $F_-$ intersecting $E$.  The disk $E$ consists of two vertical bands, one with vertical boundary in $A_-$ and the other with vertical boundary in $A_+$.  The surface $S'_1$ contains annuli $A'_-, A'_+$ that result from pinching $A_-, A_+$ along these two bands, i.e. removing a neighborhood of $E$ from the annuli and then gluing in bands parallel to $E$.
\begin{figure}[htb]
  \begin{center}
  \includegraphics[width=3.5in]{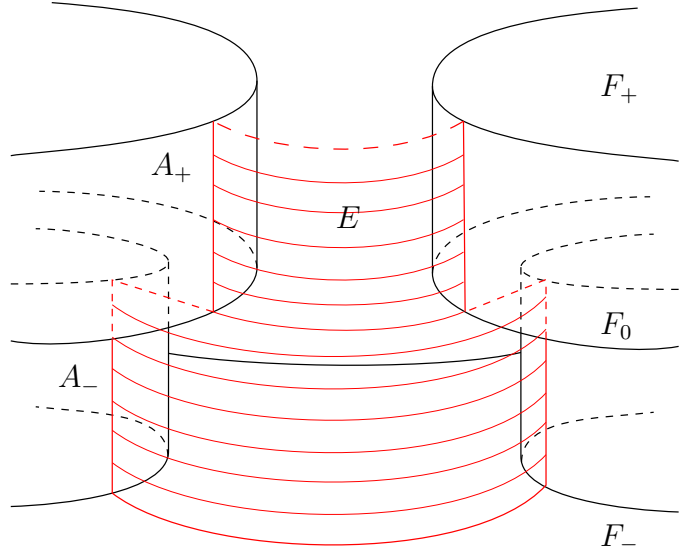}
  \put(-30,0){$F_-$}
  \put(-30,80){$F_0$}
  \put(-30,170){$F_+$}
  \put(-235,60){$A_-$}
  \put(-200,140){$A_+$}
  \put(-130,120){$E$}
  \caption{The labeling of the subsurfaces in the band move.}
  \label{closeupfig}
  \end{center}
\end{figure}

Because $S_0$ is not essential, there is a horizontal subsurface in $S_0$ that contains the boundary of a compressing disk.  If a horizontal subsurface disjoint from $A_-, A_+$ contains the boundary of a compressing disk $D_1$ for $S'_0$ then it will also contain a disk in $S'_1$, and thus for $S_1$.  This horizontal disk is disjoint from $D_0$ so there is an edge in the disk complex for $S$ from $D_0$ to $D_1$.  Otherwise, assume that every horizontal compressing disk for $S_0$ has boundary in $\partial A_-$ or $\partial A_+$.

Note that the horizontal subsurfaces of $S'$ are adjacent to $A_-$ on the same side as $E$, but adjacent to $A_+$ on the side opposite $E$.  Thus any horizontal compressing disk for $S'_0$ on the same side as $E$ is in $\partial A_+$, while any horizontal disk on the side opposite $E$ is in $A_-$.  Moreover, both of these cannot be the case since $S$ is an index-one surface, so any disjoint compressing disks are on the same side, by Lemma~\ref{disksideslem}.

In the case when $\partial A_+$ contains $\partial D_0$ on the same side as $E$, the band move cuts $D_0$ into two horizontal compressing disks, each of which can be isotoped disjoint from $D_0$ in $S$ (though not necessarily as horizontal disks).  Thus if we let $D_1$ be one of these disks, there will be an edge between them in the disk complex for $S$.

Otherwise, assume that $D_0$ is on the side opposite $E$ and $\partial D_0$ is contained in $\partial A_-$.  In particular, every other vertical annulus will have essential boundary in $\Sigma$.  If $A_-$ consists of two components, each bounding a horizontal disk, then the band move will turn the two horizontal disks into a single horizontal disk $D_1$ whose boundary can be isotoped away from $\partial D_0$.  Otherwise, the vertical annulus or one of the annuli $A'_-$ have essential boundary in $\Sigma$.

If we perform only the first half of the band move, which brings the band into the level of $F_0$, then the resulting surface contains a flipped square in this level, as in Figure~\ref{flippedbandfig}.  The vertical annuli in the surface consist of the vertical annuli in $S_0$, except that we have replaced $A_-$ with an essential vertical annulus or annuli $A'_0$.  The result is an essential surface with one flipped square.
\begin{figure}[htb]
  \begin{center}
  \includegraphics[width=5in]{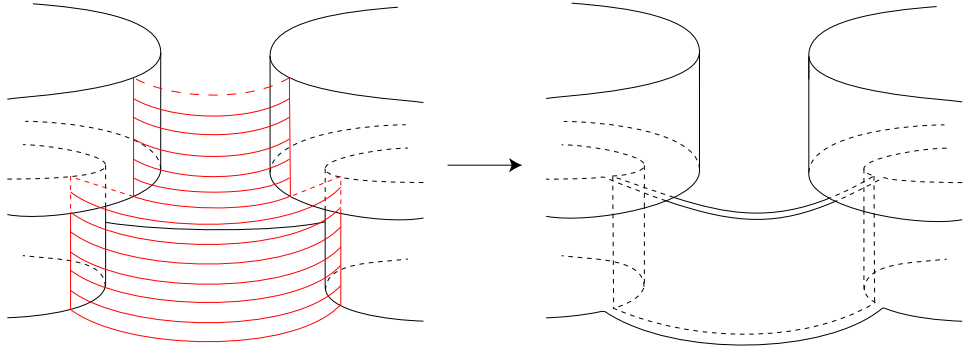}
  \caption{Stopping half way through the band move defined by the red disk defines a surface with a flipped square.}
  \label{flippedbandfig}
  \end{center}
\end{figure}

Otherwise, if the band move produces a surface $S_1$ with a horizontal disk $D_1$, we will repeat the argument for each $S_i$, then for each $S_{-i}$.  If each step produces a horizontal disk disjoint from the previous one, then we will have a path of disks from $D_k$ to $D_{-\ell}$.  In $S_k$ and $S_{-\ell}$, the disks $D^-$ and $D^+$ are either horizontal (in the case of compressing disks) or vertical (in the case of cut disks), and thus disjoint from $D_k$ and $D_{-\ell}$, respectively.  The resulting path from $D^-$ to $D^+$ contradicts the assumption that $D^-$ and $D^+$ are in distinct components of the disk complex for $S$.  Thus some $S_i$ must be an essential flat surface, possibly with a flipped square.  
\end{proof}

\section{Essential isotopies}
\label{essisosect}

Given a pair $H$ of handlebodies in $M$ whose complement is parameterized as $\Sigma \times [0,1]$, the thin position arguments in~\cite{axiomatic} show  that every Heegaard splitting for $M$ determines a thin path in $\mS(M, H)$, such that the local minima have index zero and the local maxima have index one. By Lemmas~\ref{esstightlem} and~\ref{normalizerlem}, the maxima and minima of these paths can be represented by essential surfaces.  We would like to fill in a family of essential surfaces related by a certain simple moves, which we will use in the next section to construct a nice spine for this Heegaard splitting with respect to $H$.

The first type of move is the band move already introduced above. For the second type of move, consider a flat surface $S$ containing a vertical annulus $A \subset S$ such that the boundary loops of $A$ bound horizontal disks $E_1$, $E_2$ with interiors disjoint from $S$.  If we replace $A$ with the two disks $E_1$, $E_2$, the result is a new flat surface $S'$.  We will say that $S'$ is the result of a \textit{horizontal compression} of $S$.  The inverse of this move consists of attaching a tube to $S'$ to get $S$.  In the statement of the Lemma below, we do not consider the ``direction'' of the move, so two surfaces are related by horizontal compression if and only if they are related by tubing.

For the third move, we will say that a component $C$ of a horizontal surface $S$ is a \textit{simple surface} if $C$ is the union of two horizontal subsurfaces and a collection of vertical annuli such that each vertical annulus goes from one horizontal subsurface to the other.

A \textit{parallel surface} is a component $C$ of $S$ consisting of a single horizontal subsurface and some number of vertical annuli with boundary loops in the same component of $\partial N$.  If $S'$ is the complement in $S$ of a simple surface or a parallel surface then we will say that $S'$ is the result of \textit{removing} a simple surface or parallel surface, respectively.  The reverse of this move is adding a simple surface or parallel surface.

\begin{Lem}
\label{esspathlem}
Let $E$ be a thin path in $\mS(M, H)$.  Then there is a sequence of tight surfaces $\{S_i\}$ in $N \setminus H$ representing a path equivalent to $E$ such that consecutive surfaces are related by band moves, horizontal compressions and removing simple surfaces and parallel surfaces. Moreover, each $S_i$ will be either essential or related to one of $S_{i-1}$, $S_{i+1}$ by a band move and to the other by a horizontal compression.  
\end{Lem}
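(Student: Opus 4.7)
The plan is to process the thin path $E$ one edge at a time. By Lemma~\ref{esstightlem} and Lemma~\ref{normalizerlem}, I first choose tight flat representatives for the vertices of $E$ so that local minima (which are index zero) are essential and local maxima (index one) are either essential or index-one essential with a single flipped square. Between consecutive vertices of $E$ there is a single h-disk $D$ realizing the h-compression, so the task reduces to resolving the effect of $D$ on the flat picture as an ordered sequence of band moves, horizontal compressions, and removals of simple or parallel surfaces.

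The first step is to eliminate tetrapods in $D$. Applying Lemma~\ref{tetrapodisotopylem} to an outermost tetrapod yields a sequence of tight intermediate surfaces related to one another by band moves; iterating on the (finitely many) remaining tetrapods produces a sequence $S = T_0,T_1,\dots,T_r$ after which the image of $D$ in $T_r$ has only horizontal and vertical level sets. Then I would case-split on the type of $D$: a horizontal compressing disk realizes either a horizontal compression of $T_r$ or, if the compression splits off a disk bounded by a horizontal arc in a subsurface, the removal of a simple surface; a vertical cut disk is an annulus with one boundary in $\partial N$ whose compression realizes the removal of a parallel surface; a bridge disk, after its two boundary arcs have been straightened to one horizontal and one vertical, is the disk of a single band move between adjacent horizontal subsurfaces.

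The principal obstacle is the local maxima that are only index-one essential. Tracing the proof of Lemma~\ref{normalizerlem}, such a surface $S_i$ carries a flipped square that arises precisely from performing only the first half of a band move through the flipped-square level. Completing that band move yields an essential surface which I set as $S_{i+1}$, while running the half-move in reverse, which is exactly a horizontal compression of the tube producing the flipped square, yields an essential surface which I set as $S_{i-1}$. This matches the moreover clause, and shows that every non-essential $S_i$ appears as a genuine intermediate stage between two essential neighbors related by the two different elementary moves.

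Finally I would verify that the concatenated sequence of surfaces is equivalent to $E$ in $\mS(M,H)$. Each band move and horizontal compression between tight surfaces corresponds either to a single edge of $\mS(M,H)$ or to sliding within a 2-cell whose edges are given by the Parallel Orientation Axiom; the extra vertices inserted by the tetrapod reductions all lie in faces adjacent to the original edges of $E$, and the Translation Axiom together with Lemma~\ref{uniquesplittinglem} then shows the new path is related to $E$ by horizontal slides. I expect the bookkeeping needed to keep track of added simple and parallel components during these slides to be the main technical nuisance, but no genuinely new idea should be required beyond the axiomatic apparatus already in place.
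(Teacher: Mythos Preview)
Your edge-by-edge approach has two genuine gaps.

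First, a single h-compression never corresponds to removing a simple or parallel surface: those moves delete an entire component of $S$, which in $\mS(M,H)$ amounts to a whole family of h-compressions, not one. So after you straighten a single disk $D$, the resulting move is at most a horizontal compression or a band move, and your case-split (``cut disk $\Rightarrow$ removal of a parallel surface'', etc.) does not hold. The paper avoids this by working one monotonic segment of $E$ at a time rather than one edge at a time: it takes a \emph{complete} system $D_1,\dots,D_\ell$ of h-disks on one side of the maximum, straightens them all to vertical via band moves, and only then observes that the surface has decomposed into h-incompressible pieces together with simple and parallel components, which are removed in one step. The Barrier Axiom then guarantees this lands at the correct minimum and that the resulting path is equivalent to the original segment of $E$; your appeal to Parallel Orientation and Translation does not supply this.

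Second, you do not control essentiality of the intermediate surfaces produced by the tetrapod-eliminating band moves. This is precisely where the ``moreover'' clause comes from: in the paper's argument, whenever a band move yields a non-essential surface one immediately performs the horizontal compression along the resulting trivial loop and restarts with a fresh disk system, so every non-essential $S_i$ is, by construction, sandwiched between a band move and a horizontal compression. Your sequence $T_0,\dots,T_r$ has no such mechanism. Relatedly, an index-one essential surface with a flipped square is not flat (its vertical annuli meet in four points), hence not tight, and cannot appear among the $S_i$; the paper instead takes the two resolutions of the flipped square as consecutive non-essential $S_i,S_{i+1}$ related by a band move, each flanked by a tubing or horizontal compression.
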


\begin{proof}
As noted above, we can choose a sequence of essential or index-one essential surfaces representing the maxima and minima of the path $E$.  Because these surfaces correspond to a directed path in the complex of surfaces, we can choose them so that they are pairwise disjoint.  Our goal will be to fill in the family of surfaces $\{S_i\}$ between these.

Consider the initial vertex $v_0$ of $E$ and the first maximum $v_1$.  Let $S_0$ be a representative of $v_0$ and let $S$ be a surface representing $v_1$.  If $S$ does not contain a flipped square then we will skip to the next step.  If there is a flipped square, we can eliminate it by pushing the square to the negative side of $S$, treating it like a band move.  In general, the resulting surface $S'$ will have a vertical annulus whose boundary is trivial in $\Sigma$, and we will add this surface to the squence, followed by the essential surface that results from the horizontal compression defined by this annulus.

Let $D_1,\dots,D_\ell$ be a complete system of h-disk on the negative side of $S$.  First assume that some $D_j$ is not vertical, i.e. contains a level tetrapod.  By Lemma~\ref{tetrapodisotopylem}, there is a band move of $S$ defined by this tetrapod.  If the surface created by this band move is not essential then it has a horizontal level loop bounding a disk on the negative side and we will preform a horizontal compression across this disk, then choose a new system of disks $\{D_i\}$.

If no horizontal compressions appear, we will continue removing tetrapods from the disks $\{D_j\}$. Because the disks are disjoint, a band move defined by one disk will not affect the others. Because there are finitely many tetrapods in each $D_j$, this process will either find a horizontal compression or terminate in a finite number of steps. Each time it finds a horizontal compression, then we will perform the horizontal compression, then choose a new collection of disks and run the process again.

Because a compression reduces the complexity of the surface, the process must eventually terminate without finding a compression. When this happens, every $D_j$ is a vertical disk, so $S$ must consist of h-incompressible pieces, simple surfaces and parallel surfaces.  Removing the simple surfaces and parallel pieces corresponds to h-compressing along all the vertical disks $\{D_j\}$.  By the barrier axiom, every path of h-compressions on the negative side must end at $v_0$ and must be equivalent to the initial segment of $E$. Thus we can get $S_0$ by removing the simple surfaces and parallel surfaces, then performing band moves to turn the h-incompressible components into $S_0$.  Since these components are incompressible, every surface in this sequence is essential. The reverse sequence of surfaces $S_0,\dots,S_k$ gives us a sequence from $S_0$ to $S_k = S$ satisfying the conclusion of the Lemma.

We can further repeat the process for the monotonic segment from $v_1$ to the following minimum $v_2$ and so on. In the case when a maximum is represented by an essential surface with no flipped squares, this essential surface corresponds to some $S_i$.  In the case when the maximum has a flipped square, there is no $S_i$ of this form.  Instead, the two surfaces that correspond to resolving the flipped square in different directions are consecutive surface $S_i$, $S_{i+1}$ with a band move between them.  In this case, there will be a vertical tubing (the opposite of a horizontal compression) that creates $S_i$ and a horizontal compression after $S_{i+1}$.

At the local minimum of the path $E$, there may be different representatives for the h-incompressible surfaces.  However, the surfaces will be isotopic, and any isotopy can be carried out by a sequence of band moves between tight surfaces.  Moreover, because these surfaces are h-incompressible, every tight representative will be essential.  Thus we can fill in the gap between representatives of the local minima by sequences of tight surfaces related by band moves so that the surfaces are pairwise disjoint.
\end{proof}

\section{The main theorem}
\label{mainthmsect}

\begin{Lem}
\label{thinthenesslem}
If $(\Sigma, H^-_\Sigma, H^+_\Sigma)$, $(R, H^-_R, H^+_R)$ are unstabilized Heegaard splittings for $M$ then there is a spine for $K^+_R$ with at most $\frac{1}{2}p + q$ locally maximal horizontal components with respect to the sweep-out $f$, where $p$ is the genus of $R$ and $q$ is the genus of $\Sigma$.
\end{Lem}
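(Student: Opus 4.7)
The plan is to apply the axiomatic thin position theorems of~\cite{axiomatic} with the pair of handlebodies coming from $R$, then read off the desired spine from the resulting sequence of essential flat surfaces.

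\textbf{Setup and thin path.} First I would replace $H^-_R$ and $H^+_R$ with regular neighborhoods of their spines so that their disjoint union $H$ is a pair of handlebodies with complement $N = M \setminus H$ identified with $R \times [0,1]$. After isotoping the sweep-out $f$ so that $K^\pm_\Sigma$ are disjoint from $H$, the Heegaard splitting $(\Sigma, H^-_\Sigma, H^+_\Sigma)$ corresponds, by Lemma~\ref{uniquesplittinglem}, to an equivalence class of oriented Heegaard paths in $\mS(M,H)$ with single maximum at the vertex represented by $\Sigma$. Since $\mS(M,H)$ satisfies all six thin position axioms (by the lemmas of Sections~\ref{thinhandlesect}--\ref{cassongordonsect}), the axiomatic theorems produce an equivalent thin path $E$ whose local minima have index zero and whose local maxima have index one. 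Applying Lemma~\ref{esspathlem} realizes $E$ as a sequence of tight flat surfaces $\{S_i\}$ in $R \times [0,1]$, each essential or index-one essential, with consecutive surfaces related by band moves, horizontal compressions, and the addition or removal of simple or parallel surfaces.

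\textbf{Building the spine.} The thin minima, being h-incompressible, consist of horizontal copies of $R$ together with vertical annuli. The thin maxima are index-one essential surfaces each carrying either a single flipped square or a single dual pair of h-disks, and each admits a canonical dual ``vertical'' arc that I would arrange to be monotonic with respect to $f$. To construct $K^+_R$, I would take a horizontal base spine of $H^+_R$ lying in a collar of $R \times \{1\}$, arranged so that each of its loops passes through $f$ with a single local maximum, and then attach one monotonic vertical arc for each maximum of $E$. An amalgamation argument using the translation axiom shows that the regular neighborhood of the resulting graph recovers $H^+_R$, so it is indeed a spine.

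\textbf{Counting and main obstacle.} Each base loop contributes one locally maximal horizontal component, and each saddle of the thin path contributes one more via its monotonic vertical arc. The key counts are that the horizontal base can be arranged with at most $q$ loops above the highest thin level, by pushing the base up through $H^+_\Sigma$ (whose spine has $q$ loops, since $\Sigma$ has genus $q$), and that the number of thin maxima is bounded by $\tfrac{1}{2}p$ via a standard Euler-characteristic argument using the genus $p$ of $R$ to control the complexity of the thick surfaces. The principal obstacle is making these counts precise---in particular, verifying that each thin maximum contributes exactly one (not several) locally maximal horizontal component, and that the base can truly be reduced to $q$ loops above the top thin level. This will rely on the unstabilized hypothesis on both splittings to rule out redundant saddles and on the index-one essential structure to force each maximum to produce a single monotonic dual arc.
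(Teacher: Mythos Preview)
Your setup reverses the roles of $\Sigma$ and $R$, and this is fatal to the argument. The quantity to be controlled is the number of locally maximal horizontal components of a spine of $H^+_R$ \emph{with respect to the $\Sigma$-sweep-out $f$}; ``horizontal'' here means level for $f$. To make the flat-surface machinery speak to $f$, one must take $H$ to be regular neighborhoods of the spines of $H^\pm_\Sigma$, so that the complement is $\Sigma\times[0,1]$ and the product projection \emph{is} $f$. The paper does precisely this: it represents the splitting $(R,H^-_R,H^+_R)$ by a path in $\mS(M,H)$, thins it, applies Lemma~\ref{esspathlem} to obtain flat surfaces $R_i$ in $\Sigma\times[0,1]$, and then builds $K^+_R$ inductively by adding vertical edges at horizontal compressions and horizontal graphs when simple or parallel components are removed. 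Because ``vertical'' and ``horizontal'' in this construction coincide with the level structure of $f$, the local $f$-maxima of $K^+_R$ can be read off directly.

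In your picture the flat structure lives in $R\times[0,1]$, so ``monotonic vertical arcs'' are monotonic for an $R$-sweep-out, not for $f$; there is no reason they contribute only one $f$-maximum each. The sentence ``pushing the base up through $H^+_\Sigma$'' has no clear content: the base you start from is already a genus-$p$ spine of $H^+_R$, and nothing in your construction reduces its $f$-maxima to $q$. Your count of thin maxima is also unjustified: the thin path you produce is for $\Sigma$, so the number of its maxima is governed by the complexity of $\Sigma$, not by the genus $p$ of $R$; there is no ``standard Euler-characteristic argument'' giving $\tfrac12 p$ here. In the paper the two terms arise from entirely different sources: the $\tfrac12 p$ counts simple and non-spherical parallel components removed along the way (at most $p$ of them, halved by possibly reversing $f$), while the $q$ comes from a nesting argument in $\Sigma$ bounding the number of parallel sphere components that carry endpoints of the spine.
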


\begin{proof}
Let $H$ be the union of disjoint regular neighborhoods of spines for $H^-_\Sigma$ and $H^+_\Sigma$.  Because these are spines for a Heegaard surface, the closure of the complement of $H$ is homeomorphic to $\Sigma \times [0,1]$.

By Lemma~\ref{reprepath}, there is a path in $\mS(M, H)$ that represents the Heegaard splitting $(R, H^-_R, H^+_R)$.  Let $E$ be the thin path that results from weakly reducing the initial path to a strongly irreducible path.  The net axiom guarantees such a path, Lemma~8.4 in~\cite{axiomatic} guarantees that such a path will have index-one maxima and the Casson-Gordon axiom guarantees that it will have index-zero minima.  

By Lemma~\ref{esspathlem}, we can construct a sequence $(R_i)$ of surfaces that represent a path equivalent to $E$ such that consecutive surfaces are related by band moves, vertical and horizontal compressions and tubings and such that every surface is either essential or is between a band move and a compression/tubing.  By Lemma~\ref{uniquesplittinglem}, this path also represents the Heegaard splitting $(R, H^-_R, H^+_R)$.  We will use this sequence of surfaces to build an essential spine for $H^+_R$.

Each surface $R_i$ is separating in $M$ and the components of the complement in $M$ can be divided into two types: The \textit{positive side} $C_+(R_i)$ is the union of the components that contain $R_{i+1}$ and the \textit{negative side} $C_-(R_i)$ is the union of the components that contain $R_{i-1}$.  We will say that a properly embedded graph $K_i \subset C_-(R_i)$ is a \textit{spine} for $C_+(R_i)$ if the complement $C_-(R_i) \setminus K_i$ is a union of handlebodies.

We will build a spine $K_i$ for each $R_i$ by induction. The first non-empty surface $R_1$ is either flat or parallel, so $C_-(R_1)$ is a ball or a handlebody by construction. We let $K_1$ be the empty set, so that $C_-(R) \setminus K_1$ is a ball or a handlebody.

For the inductive step, assume that $K_{i-1}$ is an essential spine for $R_{i-1}$.  We have five cases to consider, based on the four moves that can produce $R_i$ from $R_{i-1}$: vertical tubing, band moves, horizontal compression, and adding or removing simple or parallel surfaces.

If the surfaces are related by a vertical tubing, we can always isotope $K_{i-1}$ transverse to the level surfaces $\Sigma_t$ to be disjoint from the ends of the tube.  Then $K_{i-1}$ will be properly embedded in the complement $C_-(R_i)$.  Moreover, the tubing adds a one-handle to the complement, so $C_-(R_i) \setminus K_{i-1}$ is also a collection of handlebodies.  Thus $K_i = K_{i+1}$ will be a spine for $R_i$.

Next, consider the case when the surfaces are related by a band move.  Such a move consists of three parts:  First, we have the option of cutting a horizontal level into two pieces.  Next we move a band between to levels facing the same way, past a level facing the opposite way.  Finally, we pull the resulting surface tight.  

We can push the endpoints of $K_{i-1}$ away from the loops along which we want to split the initial subsurface and disjoint from the band disk. The band move pushes the band into $C_+(R_{i-1})$ and away from $K_{i-1}$ so we can extend $K_{i-1}$ to keep it properly embedded after this move.  Moreover, if the band is separating and cuts off a planar surface, we will always slide the endpoints of the vertical edges into the non-planar component.  When we pull the surface tight, we can extend the vertical edges to have endpoints in the resulting surface.

Next, in the case of a horizontal compression, we will add a vertical edge dual to the compression.  Note that because $R_i$ is tight, the horizontal compression cannot cut off a sphere component disjoint from $H$. However, if the horizontal compression cuts off a parallel sphere component with no endpoints of $K_i$, then we will immediately remove this parallel sphere and will not add an edge to $K_i$.

In the case when we add a parallel or simple component to $S$, the complement $C_-(R_i)$ changes by the addition of a ball or handlebody. Thus we will leave $K_i$ equal to $K_{i-1}$ as we did in the initial step.

Finally, consider the case when we remove a simple surface or a parallel surface.  A simple surface bounds a handlebody whose spine sits in some $\Sigma_t$.  When we remove such a surface, we will construct $K_{i+1}$ by adding this spine to $K_i$, then extending any vertical edges with endpoints in the two horizontal subsurfaces so that their endpoints are in the spine.

A parallel surface component bounds a ball or a handlebody whose boundary is parallel into $H^-_\Sigma$ or $H^+_\Sigma$.  Let $\ell$ be a horizontal spine for this handlebody (a single vertex in the case of a sphere).  If the parallel surface is a sphere that does not contain any endpoints of $K_i$, then it has just resulted from a horizontal compression and we will remove the sphere without modifying $K_i$, as noted above.  Otherwise, we will let $K_{i+1}$ be the union of a spine $\ell \subset \partial N$ and $K_i$ after extending the vertical edges as in the case of a simple surface.

By construction, the final surface $R_k$ is the empty surface, so the spine $K^+_R = K_k$ for its complement is a spine for a Heegaard splitting. Moreover, this Heegaard splitting is an amalgamation of the generalized Heegaard splitting defined by the path $E$.  Because amalgamations are unique (See, for example, Lemma~17.4 in~\cite{axiomatic}.) $K^+_R$ is a spine for $(R, H^-_R, H^+_R)$.  

To bound the number of locally maximal horizontal components of $K^-_R$, we examine how the graph was constructed.  Adding a vertical edge to the graph does not create a local maximum. Replacing a simple surface or a parallel surface with a horizontal graph will create a local maximum in the case when all the vertical edges with endpoints in the simple surface or parallel surface are below the surface.  

The number of simple surfaces and non-spherical parallel surfaces is at most the genus of $R$.  Moreover, by switching the direction of the sweep-out $f$, we can assume that at most half of them are local maxima.  This corresponds to flipping $\Sigma$, i.e. replacing $(\Sigma, H^-_\Sigma, H^+_\Sigma)$ with $(\Sigma, H^+_\Sigma, H^-_\Sigma)$ (Note the change of signs.)  Or, equivalently, we can flip $R$, then change the roles of $H^-_\Sigma$ and $H^+_\Sigma$.  The stable genus of $(R, H^-_R, H^+_R)$ and $(R, H^+_R, H^-_R)$ is at most $2q$, so this stays below the desired bound.  Thus we can assume that there are at most $\frac{1}{2} p$ local maxima coming from simple surfaces and non-spherical parallel surfaces.

The only other way we can create a local maximum is by removing a parallel sphere component that contains endpoints of $K_i$, whose vertical annuli are above its horizontal subsurface. The horizontal planar surface is created by a band move.  By the instructions for handling band moves, if the band had separated the planar surface from a non-planar surface then we would have kept the endpoints on the non-planar side.  Thus the planar surface must have been created by a band move along a non-separating band in a multi-punctured torus.

Let $F_1,\dots,F_n$ be the set of parallel spheres that contain endpoints of edges, ordered in accordance with the heights (relative to $\Sigma$) of their level surfaces so that $F_n$ is the highest.  For each $F_j$, let $G_j$ be the multi-punctured torus that is cut to produce $F_j$.  Each $G_j$ will be a subsurface of a different $R_i$ than its corresponding $F_j$, but the projection of $G_j$ will contain the projection of $F_j$.  Moreover, because each $F_j$ contains vertical annuli above its subsurface, the projections of $F_j$ will be disjoint from each $G_k$ with $k < j$.

Let $X_k \subset \Sigma$ be the union of the projections of $G_1, \dots, G_k$.  Because the projection of $G_j$ is a multi-punctured torus and the projection of $F_j$ is a planar subsurface that results from cutting $G_j$ along a band, the projection of $F_j$ contains a loop that is non-separating in $F_j$, and thus in $X_j$.  Moreover, $F_j$ is disjoint from $X_{j-1}$ by the above argument, so the loop in $F_j$ is disjoint from the loops in each $F_k$ for $k < j$.  By induction, the union of all such loops is non-separating.  Since $X_n$ contains $n$ mutually non-separating loops, $X_n$ has genus at least $n$.  Since $X_n$ is a subsurface of $\Sigma$ we must have that $n \leq q$, i.e. there are at most $q$ maxima that come from simple spheres.

Thus in the spine constructed above, there are at most $\frac{1}{2}p + q$ locally maximal horizontal components.
\end{proof}

We can now prove that any two Heegaard splittings $(\Sigma, H^-_\Sigma, H^+_\Sigma)$ and $(R, H^-_R, H^+_R)$, of the same 3-manifold $M$ have a common stabilization of genus at most $\frac{2}{3}p + 2q - 1$, where $p$ and $q$ are the genera of $\Sigma$ and $R$.

\begin{proof}[Proof of Theorem~\ref{mainthm}]
By Lemma~\ref{thinthenesslem}, there is a spine $K^+_R$ for $H^+_R$ with at most $\frac{1}{2}p + q$ locally maximal horizontal components.  By Lemma~\ref{esspinelem}, this implies that the two Heegaard splittings have a common stabilization of genus $p + q + n - 1$, where $n \leq \frac{1}{2}p + q$ is the number of local minima in the spine.  Thus the genus of the common stabilization $(T, H^-_T, H^+_T)$ is at most $\frac{3}{2}p + 2q -1$.
\end{proof}

\bibliographystyle{amsplain}
\bibliography{handlethin}

\providecommand{\bysame}{\leavevmode\hbox to3em{\hrulefill}\thinspace}
\providecommand{\MR}{\relax\ifhmode\unskip\space\fi MR }
\providecommand{\MRhref}[2]{%
  \href{http://www.ams.org/mathscinet-getitem?mr=#1}{#2}
}
\providecommand{\href}[2]{#2}
\begin{thebibliography}{10}

\bibitem{bachman}
D.~Bachman, \emph{{Stabilizations of Heegaard splittings of sufficiently
  complicated 3-manifolds (Preliminary Report)}}, preprint (2008),
  arXiv:0806.4689.

\bibitem{htt:stabs}
Joel Hass, Abigail Thompson, and William Thurston, \emph{Stabilization of
  {H}eegaard splittings}, Geom. Topol. \textbf{13} (2009), no.~4, 2029--2050.
  \MR{2507114 (2010k:57044)}

\bibitem{me:stabs2}
Jesse Johnson, \emph{Bounding the stable genera of {H}eegaard splittings from
  below}, J. Topol. \textbf{3} (2010), no.~3, 668--690, arXiv:0807.2866.
  \MR{2684516}

\bibitem{axiomatic}
\bysame, \emph{Computing isotopy classes of {H}eegaard splittings}, preprint
  (2010), arXiv:1004.4669.

\bibitem{lei}
Fengchun Lei, \emph{On stability of {H}eegaard splittings}, Math. Proc.
  Cambridge Philos. Soc. \textbf{129} (2000), no.~1, 55--57. \MR{1757777
  (2001c:57023)}

\bibitem{reid}
K.~Reidemeister, \emph{{Zur dreidimensionalen Topologie}}, Abh. Math. Sem.
  Univ. Hamburg \textbf{11} (1933), 189--194.

\bibitem{rub:compar}
Hyam Rubinstein and Martin Scharlemann, \emph{Comparing {H}eegaard splittings
  of non-{H}aken {$3$}-manifolds}, Topology \textbf{35} (1996), no.~4,
  1005--1026. \MR{1404921 (97j:57021)}

\bibitem{rub:alnormal}
J.~H. Rubinstein, \emph{Polyhedral minimal surfaces, {H}eegaard splittings and
  decision problems for {$3$}-dimensional manifolds}, 1997, pp.~1--20.
  \MR{1470718 (98f:57030)}

\bibitem{sct:crossi}
Martin Scharlemann and Abigail Thompson, \emph{Heegaard splittings of {$({\rm
  surface})\times I$} are standard}, Math. Ann. \textbf{295} (1993), no.~3,
  549--564. \MR{1204837 (94b:57020)}

\bibitem{schtom}
\bysame, \emph{Thin position for {$3$}-manifolds}, Geometric topology ({H}aifa,
  1992), Contemp. Math., vol. 164, Amer. Math. Soc., Providence, RI, 1994,
  pp.~231--238. \MR{1282766 (95e:57032)}

\bibitem{Schl}
Jennifer Schultens, \emph{Heegaard splittings of {S}eifert fibered spaces with
  boundary}, Trans. Amer. Math. Soc. \textbf{347} (1995), no.~7, 2533--2552.
  \MR{1297537 (95j:57019)}

\bibitem{sing}
James Singer, \emph{Three-dimensional manifolds and their {H}eegaard diagrams},
  Trans. Amer. Math. Soc. \textbf{35} (1933), no.~1, 88--111. \MR{1501673}

\bibitem{stocking}
Michelle Stocking, \emph{Almost normal surfaces in {$3$}-manifolds}, Trans.
  Amer. Math. Soc. \textbf{352} (2000), no.~1, 171--207. \MR{1491877
  (2000c:57045)}

\bibitem{takao}
K.~Takao, \emph{{A refinement of Johnson's bounding for the stable genera of
  Heegaard splittings}}, prerint (2009).

\bibitem{tomtay}
Scott~A. Taylor and Maggy Tomova, \emph{Heegaard surfaces for certain graphs in
  compressionbodies}, preprint (2009), arXiv:0910.3019.

\end{thebibliography}

\end{document}